\newcommand{\ud}{\mathrm{d}}
\newcommand{\cH}{\mathcal{H}}
\newcommand{\N}{\mathbb N}
\theoremstyle{plain}
\newtheorem{theorem}{Theorem}[section]
\newtheorem{lemma}[theorem]{Lemma}
\theoremstyle{definition}
\newtheorem{remark}[theorem]{Remark}
\newtheorem{example}[theorem]{Example}
\numberwithin{equation}{section}
\begin{document}

\title[Projection and convergence for abstract linear inverse problems]
{On general convergence behaviours \\ of finite-dimensional approximants \\ for abstract linear inverse problems}

\author[No\`e Angelo Caruso]{No\`e Angelo Caruso}

\address[No\`e Angelo Caruso]{%
International School for Advanced Studies\\
via Bonomea 265, I-34136 Trieste (ITALY)\\
and Gran Sasso Science Institute\\
Viale F.~Crispi 7, I-67100 L'Aquila (ITALY)}

\email{noe.caruso@gssi.it}

\author[Alessandro Michelangeli]{Alessandro Michelangeli}
\address[Alessandro Michelangeli]{Institute for Applied Mathematics \\
and Hausdorff Center for Mathematics\\
University of Bonn\\
Endenicher Allee 60, D-53115 Bonn (GERMANY)}
\email{michelangeli@iam.uni-bonn.de}

\author[P.~Novati]{Paolo Novati}
\address[P.~Novati]{Universit\`{a} degli Studi di Trieste \\ Piazzale Europa 1 \\ I-34127 Trieste (Italy).}
\email{novati@units.it}


\begin{abstract}
In the framework of abstract linear inverse problems in infinite-dimensional Hilbert space we discuss generic convergence behaviours of approximate solutions determined by means of general projection methods, namely outside the standard assumptions of Petrov-Galerkin truncation schemes. This includes 
a discussion of the mechanisms why the error or the residual generically fail to vanish in norm, and the identification of practically plausible sufficient conditions for such indicators to be small in some weaker sense. The presentation is based on theoretical results together with a series of model examples and numerical tests.
\end{abstract}

\date{\today}

\makeatletter{\renewcommand*{\@makefnmark}{}
\footnotetext{Asymptotic Analysis (2021)}\makeatother}

\subjclass[2020]{41A65, 46N40, 47N40}


\keywords{linear inverse problems, infinite-dimensional Hilbert space, ill-posed problems, orthonormal basis discretisation, bounded linear operators, Krylov subspaces, Krylov solution, GMRES, conjugate gradient, LSQR}

\thanks{This work is partially supported by the Alexander von Humboldt foundation}

\maketitle


\section{Introduction. Inverse problems and general projection methods.}\label{intro}

The goal of this work is to present \emph{generic convergence phenomena} for approximate solutions to linear inverse problems. In particular we focus on (weak) convergence mechanisms that usually are not considered in typical (strong) approximation schemes: our long term ambition, indeed, is to systematically describe a much wider variety of weaker, yet fully informative ways to control the finite-dimensional truncations of an infinite-dimensional inverse problem and the possible asymptotic convergence of the approximated solutions to an actual solution for the original problem. At this initial stage of investigation we cannot be so comprehensive and we content ourselves to outline certain general mechanisms. As a compensation for such initial lack of systematicness, we supplement our reasonings with an amount of instructive and somewhat unexpected examples.

To begin with, let us clarify the level of generality for our discussion. It is purposely chosen so as to encompass a \emph{wider} class of solution schemes than the customary `Petrov-Galerkin' ones, as we shall discuss in a moment.
\begin{itemize}
 \item The \emph{inverse problem} is modelled abstractly as an inverse problem on Hilbert space, the latter being possibly infinite-dimensional.
 \item The \emph{projection method} implemented to produce finite-dimensional truncations of the original problem, and hence approximate solutions to it, only undergoes minimal operational assumptions making the algorithm well defined, yet allowing for singular truncated problems with no a priori guaranteed convergence theory in the strong sense.
 \end{itemize}

Let us describe the above framework in detail.

To fix the nomenclature and the notation, by an \emph{abstract linear inverse problem} one means a \emph{linear inverse problem on Hilbert space}, namely the problem, given a Hilbert space $\cH$, a linear operator $A$ acting on $\cH$, and a vector $g\in\cH$, to determine the solution(s) $f\in\cH$ to the linear equation
\begin{equation}\label{eq:gen}
 Af\;=\;g\,.
\end{equation}
We shall say that: \eqref{eq:gen} is \emph{solvable} if a solution $f$ exists, namely if $g\in\mathrm{ran}A$; \eqref{eq:gen} is \emph{well-defined} if additionally the solution $f$ is unique, i.e., if $A$ is also injective (in which case one refers to  $f$ as the `\emph{exact}' solution); \eqref{eq:gen} is \emph{well-posed} if there exists a unique solution that depends continuously (i.e., in the norm of $\cH$) on the datum $g$, equivalently, that $g\in\cH$ and $A$ has everywhere defined bounded inverse.

In applications, the linear law $A$ that associates an input $f$ to an output $g$ is prescribed by some physical model, and hence within that model such a law is exactly known. Experimental measurements produce a possibly approximate knowledge of the output $g$, from which one wants to obtain information on the input $f$.

Of course what is `exactly known' of $A$ is its domain and action as an operator acting on $\cH$. Other relevant features of $A$ might not be explicitly accessible, and only computable within some approximation. For example,  based on the theoretical framework within which the problem is modelled, one might know the \emph{explicit} integral kernel $a(x,y)$ of a Hilbert-Schmidt operator $A:L^2(\Omega,\ud\mu)\to L^2(\Omega,\ud\mu)$ on some $L^2$-space $\cH=L^2(\Omega,\ud\mu)$, and hence the explicit action $(Ah)(x)=\int_\Omega a(x,y)h(y)\ud \mu(y)$, and yet it might not be possible to write explicitly (exactly) the singular value decomposition for $A$.

Now, although well-defined inverse problems are in a sense trivial theoretically, as the existence and uniqueness of the solution is not of concern, to determine solutions by means of a (numerical) \emph{algorithm}  poses non-trivial questions on the computation of approximate solutions, their convergence to the `exact' one, the speed of convergence, etc.

In this respect, the possible infinite-dimensionality of the problem brings additional complication. We actually refer here to the case when \eqref{eq:gen} is \emph{genuinely} infinite-dimensional, meaning that not only $\dim\cH=\infty$, but also (see, e.g., \cite[Sect.~1.4]{schmu_unbdd_sa}) that $A$ is \emph{not} reduced to $A=A_1\oplus A_2$ by an orthogonal direct sum decomposition $\cH=\cH_1\oplus\cH_2$ with $\dim\cH_1<\infty$, $\dim\cH_2=\infty$, and $A_2=\mathbb{O}$ (for otherwise the effective problem would deal with a finite matrix).

As well known, in such infinite-dimensional setting, scientific computing demands a suitable discretisation of the problem \eqref{eq:gen} to \emph{truncated} finite-dimensional Hilbert spaces, in order to run the numerics with actual matrices.

\emph{Typically}, this is done by means of what is customarily referred to as a `\emph{projection method}', the essence of which is the following (details in Section \ref{sec:finitedimtrunc}). Two convenient, a priori known orthonormal systems $(u_n)_{n\in\mathbb{N}}$ and $(v_n)_{n\in\mathbb{N}}$ of $\cH$ are taken, and for each (large enough) integer $N$ the corresponding finite-dimensional `solution space' $\mathrm{span}\{u_1,\dots,u_N\}$ and `trial space' $\mathrm{span}\{v_1,\dots,v_N\}$ are constructed. Denoting by $P_N$ and $Q_N$ the orthogonal projection maps onto, respectively, the solution and the trial space, one replaces the original problem \eqref{eq:gen} with its $N$-dimensional truncation
\begin{equation}
 Q_N A P_N\, \widehat{f^{(N)}}\;=\;Q_N g
\end{equation}
in the unknown $\widehat{f^{(N)}}$ in the solution space. In other words, one searches for $\widehat{f^{(N)}}\in \mathrm{span}\{u_1,\dots,u_N\}$, solving 
\begin{equation}\label{eq:PGformulation}
Q_N(A \widehat{f^{(N)}} - g) \;=\;0\,.
\end{equation}
For large enough $N$, the resulting $\widehat{f^{(N)}}$'s are expected to produce reasonable approximations for the `exact' solution(s) $f$ to $Af=g$.

Some earlier works on truncation schemes include \cite{stummel1970,stummel1971,kranoselski-1972-approxsoll, vainikko1974,vainikko1978,vainikko1981,karma1992}. Their typical approach consists of studying the discrete convergence using a system of connection operators, this way encompassing a vast class of truncation schemes. Our approach is more specific in that we remain within the class of `\emph{projection methods}'. 

Surely the most popular class of such truncation schemes are the celebrated \emph{Petrov-Galerkin projection methods}  \cite[Chapter 4]{kranoselski-1972-approxsoll}, \cite[Chapter 5]{Saad-2003_IterativeMethods}, \cite[Chapter 9]{Atkinson-Han-TheoNumAnal2009} (and one simply speaks of a \emph{Galerkin method} when $u_n=v_n$ for all $n$). Such methods are customarily implemented under special assumptions that at this abstract level of generality can be presented as follows: 
\begin{itemize}
 \item both $(u_n)_{n\in\mathbb{N}}$ and $(v_n)_{n\in\mathbb{N}}$ are orthonormal \emph{bases} of $\cH$;
 \item moreover, $A$ and such bases are such that both the infinite-dimensional problem \eqref{eq:gen} and each truncated problem \eqref{eq:PGformulation} admit a unique solution, respectively $f$ and $\widehat{f^{(N)}}$, satisfying $\|\widehat{f^{(N)}}-f\|_{\cH}\to 0$ as $N\to\infty$.
\end{itemize}

All this is very familiar for certain classes of boundary value problems on $L^2(\Omega)$, where $\Omega$ is some domain in $\mathbb{R}^d$, the typical playground for Galerkin and Petrov-Galerkin finite element methods \cite{Ern-Guermond_book_FiniteElements,Quarteroni-book_NumModelsDiffProb}. In these cases $A$ is an \emph{unbounded} operator, say, of elliptic type \cite[Chapter 3]{Ern-Guermond_book_FiniteElements}, \cite[Chapter 4]{Quarteroni-book_NumModelsDiffProb}, of Friedrichs type \cite[Sect.~5.2]{Ern-Guermond_book_FiniteElements}, \cite{EGC,ABCE,Antonic-Erceg-Mich-2017}, of parabolic type \cite[Chapter 6]{Ern-Guermond_book_FiniteElements}, \cite[Chapter 5]{Quarteroni-book_NumModelsDiffProb}, of `mixed' (i.e., inducing saddle-point problems) type \cite[Sect.~2.4 and Chapter 4]{Ern-Guermond_book_FiniteElements}, etc. Such $A$'s are assumed to satisfy (and so do they in applications) some kind of coercivity, or more generally one among the various classical conditions that ensure the corresponding problem \eqref{eq:gen} to be well-posed, such as the Banach-Ne\u{c}as-Babu\v{s}ka Theorem or the Lax-Milgram Lemma \cite[Chapter 2]{Ern-Guermond_book_FiniteElements}. This makes the finite-dimensional truncation and the infinite-dimensional error analysis widely studied and well understood. In that context, in order for the finite-dimensional solutions to converge strongly, one requires stringent yet often plausible conditions \cite[Sect.~2.2-2.4]{Ern-Guermond_book_FiniteElements}, \cite[Sect.~4.2]{Quarteroni-book_NumModelsDiffProb}
\begin{itemize}
 \item[(a)] both on the truncation spaces, that need to approximate suitably well the ambient space $\cH$ (`\emph{approximability}', e.g., the interpolation capability of finite elements),
 \item[(b)] and on the behaviour of the reduced problems, that need admit solutions that are uniformly controlled by the data 
(`\emph{uniform stability}'),
 \item[(c)] and that are suitably good approximate solutions of the original problem (`\emph{asymptotic consistency}'), 
 \item[(d)] together with some suitable boundedness of the problem in appropriate topologies (`\emph{uniform continuity}').
\end{itemize}

As plausible as the above conditions are, they are \emph{not} matched by several other types of inverse problems of applied interest.

Mathematically this is the case, for instance, whenever $A$ does not have a `good' inverse, say, when $A$ is a compact operator on $\cH$ with arbitrarily small singular values.
Another meaningful example occurs when the the space for the finite-dimensional truncations is the so called Krylov subspace $\mathcal{K}(A,g)$ associated to the problem \eqref{eq:gen}, namely is spanned by the vectors $g,Ag,A^2g,\dots$ (a very popular choice indeed, as Krylov subspace methods are often counted among the `Top 10 Algorithms' of the 20th century \cite{Dongarra-Sullivan-Best10-2000,Cipra-SIAM-News}), and yet $\mathcal{K}(A,g)$ has a non-trivial orthogonal complement in $\cH$ and the solution $f$ to $Af=g$ fails to belong to the closure of $\mathcal{K}(A,g)$ -- we recently studied the phenomenon of `Krylov solvability', or lack thereof, in our recent works \cite{CMN-2018_Krylov-solvability-bdd,CM-2019_ubddKrylov}.

For these reasons, we rather adopt in this work the perspective of \emph{general projection methods}, thus moving \emph{outside} the standard working assumptions of the Petrov-Galerkin framework, in the sense that, while still projecting the infinite-dimensional problem to finite-dimensional truncations, yet
\begin{itemize}
 \item[(i)] the problem \eqref{eq:gen} is only assumed to be solvable;
 \item[(ii)] no a priori conditions are assumed that guarantee the truncated problems \eqref{eq:PGformulation} to be well-defined (they may thus have a multiplicity of solutions), let alone solvable (they may have no solution at all);
 \item[(iii)] not all the standard conditions (a)-(d) above are assumed (unlike for Petrov-Galerkin schemes), which ensured a convergence theory in the Hilbert norm of the error and/or the residual along the sequence of approximate solutions.
\end{itemize}

In particular, with reference to (iii), we carry on the point of view that error and residual may be controlled in a still informative way in some \emph{weaker} sense than the expected norm topology of the Hilbert space.

To this aim, we identify practically plausible sufficient conditions for the error or the residual to be small in such generalised senses and we discuss the \emph{mechanisms} why the same indicators may actually fail to vanish in norm. We also argue the genericity of such situations. We specialise our observations first on the case when $A$ is compact (Section \ref{sec:compactlinearinverse}) and then when $A$ is generically bounded (Section \ref{sec:boundedlinearinverse}). In the latter case, specifically when $A$ is self-adjoint and non-negative, we find instructive to re-interpret our findings in view of well-known convergence results for the conjugate gradient scheme, which is in fact a Krylov subspace projection method (Section \ref{sec:conjgrad}).


Last, in the concluding part of the work (Section \ref{sec:numerics}), we present a selection of numerical tests that illustrate the main features discussed theoretically.

In conclusion, we should like to stress that our main goal here is to present generic convergence mechanisms that occur, unavoidably (in the precise sense that we shall discuss), when one weakens some of the stringent working conditions that are typical of the Petrov-Galerkin schemes, thus giving rise to more general behaviours at the finite-dimensional truncated level and in the possible convergence to the ``exact'' solution(s).

In doing so, as mentioned already, we do not aim at a comprehensive theory of general projection methods for infinite-dimensional inverse problems, and we rather keep the point of view of presenting \emph{generic} features, phenomena, and difficulties that look `unavoidable' at the considered level of generality, and we discuss them through an amount of model examples that challenge the common intuition. In our intentions this should provide the setting for a future thorough analysis of classes of infinite-dimensional inverse problems.

\medskip

\textbf{General notation.} Besides further notation that will be declared in due time, we shall keep the following convention. $\cH$ denotes a real or complex Hilbert space, that will be separable throughout this note, with norm $\|\cdot\|_{\cH}$ and scalar product $\langle\cdot,\cdot\rangle$, which in the complex case is anti-linear in the first entry and linear in the second. Bounded operators on $\cH$ are tacitly understood to be linear and everywhere defined. $\|\cdot\|_{\mathrm{op}}$ denotes the corresponding operator norm. The space of bounded linear operators on $\cH$ is denoted with $\mathcal{B}(\cH)$. The spectrum of an operator $A$ is denoted by $\sigma(A)$. $\mathbbm{1}$ and $\mathbbm{O}$ are, respectively, the identity and the zero operator, meant as finite matrices or infinite-dimensional operators depending on the context. An upper bar denotes the complex conjugate $\overline{z}$ when $z\in\mathbb{C}$, and the norm closure $\overline{\mathcal{V}}$ of the span of the vectors in $\mathcal{V}$ when $\mathcal{V}$ is a subset of $\cH$. For $\psi,\varphi\in\cH$, by $|\psi\rangle\langle\psi|$ and $|\psi\rangle\langle\varphi|$ we shall denote the $\cH\to\cH$ rank-one maps acting respectively as $f\mapsto \langle \psi, f\rangle\,\psi$ and $f\mapsto \langle \varphi, f\rangle\,\psi$ on generic $f\in\cH$. For identities such as $\psi(x)=\varphi(x)$ in $L^2$-spaces we will tacitly understand the `for almost every $x$' specification in the equality.


\section{Finite-dimensional truncation}\label{sec:finitedimtrunc}

\subsection{Set up and notation}\label{sec:finitedimtrunc-setup}~

Let us start with revisiting, in the present abstract terms, the setting and the formalism for a generic projection scheme -- in the framework of Galerkin and Petrov-Galerkin methods this is customarily referred to as the `\emph{approximation setting}' \cite[Sect.~2.2.1]{Ern-Guermond_book_FiniteElements}.

Let $(u_n)_{n\in\mathbb{N}}$ and $(v_n)_{n\in\mathbb{N}}$ be two orthonormal \emph{systems} of the considered Hilbert space $\cH$. They need not be orthonormal \emph{bases}, although their completeness is often crucial for the goodness of the approximation.


The choice of $(u_n)_{n\in\mathbb{N}}$ and $(v_n)_{n\in\mathbb{N}}$ depends on the specific approach. In the framework of finite element methods they can be taken to be the global shape functions of the interpolation scheme \cite[Chapter 1]{Ern-Guermond_book_FiniteElements}. For Krylov subspace methods they are just the spanning vectors of the associated Krylov subspace \cite[Chapter 2]{Liesen-Strakos-2003}.

Correspondingly, for each $N\in\mathbb{N}$, the orthonormal projections in $\cH$ respectively onto $\mathrm{span}\{u_1,\dots,u_N\}$ and $\mathrm{span}\{v_1,\dots,v_N\}$ shall be
\begin{equation}\label{eq:defPNQN}
 P_N\;:=\;\sum_{n=1}^N|u_n\rangle\langle u_n|\,,\qquad Q_N\;:=\;\sum_{n=1}^N|v_n\rangle\langle v_n|\,.
\end{equation}

Associated to a given inverse problem $Af=g$ in $\cH$ as \eqref{eq:gen}, one considers the finite-dimensional truncations induced by $P_N$ and $Q_N$, that is, for each $N$, the problem to find solutions $\widehat{f^{(N)}}\in P_N\cH$ to the equation
\begin{equation}\label{eq:Ntrunc_inf}
 (Q_N A P_N) \widehat{f^{(N)}} \;=\; Q_N g\,.
\end{equation}
In \eqref{eq:Ntrunc_inf} $Q_N g=\sum_{n=1}^N \langle v_n,g\rangle v_n$ is the datum and $\widehat{f^{(N)}}=\sum_{n=1}^N \langle u_n,\widehat{f^{(N)}}\rangle u_n$ is the unknown, and the compression $Q_N A P_N$ is only non-trivial as a map from $P_N\cH$ to $Q_N\cH$, its kernel containing at least the subspace $(\mathbbm{1}-P_N)\cH$.


There is an obvious and irrelevant degeneracy (which is infinite when $\dim\cH=\infty$) in \eqref{eq:Ntrunc_inf} when it is regarded as a problem on the whole $\cH$. The actual interest towards \eqref{eq:Ntrunc_inf} is the problem resulting from the identification $P_N\cH\cong\mathbb{C}^N\cong Q_N\cH$, in terms of which $P_N f\in\cH$ and $Q_N g\in\cH$ are canonically identified with the vectors
\begin{equation}\label{eq:fNgN}
 f_N\;=\;
 \begin{pmatrix}
  \langle u_1,f\rangle \\
  \vdots \\
  \langle u_N,f\rangle
 \end{pmatrix}\in\mathbb{C}^N\,,\qquad
 g_N\;=\;
 \begin{pmatrix}
  \langle v_1,g\rangle \\
  \vdots \\
  \langle v_N,g\rangle
 \end{pmatrix}\in\mathbb{C}^N\,,
\end{equation}
and $Q_N A P_N$ with a $\mathbb{C}^N\to\mathbb{C}^N$ linear map represented by the $N\times N$ matrix $A_N=(A_{N;ij})_{i,j\in\{1,\dots,N\}}$
\begin{equation}\label{eq:matrixAN}
 A_{N;ij}\;=\;\langle v_i,Q_N AP_N u_j\rangle\,.
\end{equation}
The matrix $A_N$ is what in the framework of finite element methods for partial differential equations is customarily referred to as the `\emph{stiffness matrix}'.

We shall call the linear inverse problem
\begin{equation} \label{eq:Ntrunc}
 A_N f^{(N)}\;=\;g_N
\end{equation}
with datum $g_N\in\mathbb{C}^N$ and unknown $f^{(N)}\in\mathbb{C}^N$, and matrix $A_N$ defined by \eqref{eq:matrixAN}, the \emph{$N$-dimensional truncation} of the original problem $Af=g$.

Let us stress the meaning of the present notation.
\begin{itemize}
 \item $Q_N A P_N$, $P_Nf$, and $Q_N g$ are objects (one operator and two vectors) referred to the whole Hilbert space $\cH$, whereas $A_N$, $f^{(N)}$, $f_N$, and $g_N$ are the analogues referred now to the space $\mathbb{C}^N$.
 \item Moreover, the subscript in $A_N$, $f_N$, and $g_N$ indicates that the components of such objects are precisely the corresponding components, up to order $N$, respectively of $A$, $f$, and $g$, with respect to the tacitly declared bases $(u_n)_{n\in\mathbb{N}}$ and $(v_n)_{n\in\mathbb{N}}$, through formulas \eqref{eq:fNgN}-\eqref{eq:matrixAN}. 
 \item As opposite, the superscript in $f^{(N)}$ indicates that the components of the $\mathbb{C}^N$-vector $f^{(N)}$ are not necessarily to be understood as the first $N$ components of the $\cH$-vector $f$ with respect to the basis $(u_n)_{n\in\mathbb{N}}$, and in particular for $N_1<N_2$ the components of $f^{(N_1)}$ are not a priori equal to the first $N_1$ components of $f^{(N_2)}$. In fact, if $f\in\cH$ is a solution to $Af=g$, it is evident from obvious counterexamples that in general the truncations $A_N$, $f_N$, $g_N$ do \emph{not} satisfy the identity $A_N f_N=g_N$, whence the notation $f^{(N)}$ for the unknown in \eqref{eq:Ntrunc}.
 \item Last, for a $\mathbb{C}^N$-vector $f^{(N)}$ the notation $\widehat{f^{(N)}}$ indicates a vector in $\cH$ whose first $N$ components, with respect to the basis $(u_n)_{n\in\mathbb{N}}$, are precisely those of $f^{(N)}$, all others being zero. Thus, as pedantic as it looks, $f^{(N)}=(\widehat{f^{(N)}})_N$ and  $f_N=(\widehat{f_N})_N$, and of  course in general $f\neq \widehat{f_N}$.
\end{itemize}

With $A$, $g$, $(u_n)_{n\in\mathbb{N}}$, and $(v_n)_{n\in\mathbb{N}}$ explicitly known, the truncated problem \eqref{eq:Ntrunc} is explicitly formulated and, being finite-dimensional, it is suited for numerical algorithms.

This poses the general question on \emph{whether the truncated problem itself is solvable, and whether its exact or approximate solution $f^{(N)}$ is close to the exact solution $f$ and in which (possibly quantitative) sense}. 

Let us elaborate more on these two issues in the following two subsections.

\subsection{Singularity of the truncated problem}~

It is clear, first of all, that the question of the singularity of the truncated problem \eqref{eq:Ntrunc} makes sense here \emph{eventually in $N$}, meaning for all $N$'s that are large enough. For a \emph{fixed} value of $N$ the truncation might drastically alter the problem so as to make it manifestly non-informative as compared to $Af=g$, such alteration then disappearing for larger values.

%
%

Yet, even when the solvability of $A_N f^{(N)}=g_N$ 
is inquired eventually in $N$, it is no surprise that the answer is generically negative.

\begin{example}\label{example:RN}
 That the matrix $A_N$ may remain singular for arbitrary $N$ even when the operator $A$ is injective can be seen, for example, with the truncation of the weighted right-shift operator $\mathcal{R}=\sum_{n\in\mathbb{Z}}\sigma_n|e_{n+1}\rangle\langle e_n|$ on $\ell^2(\mathbb{\mathbb{Z}})$ onto the $\mathrm{span}\{e_{-N},e_{-N+1},\dots,e_{N-1},e_N\}$, where $(e_n)_{n\in\mathbb{Z}}$ is the canonical orthonormal basis of $\ell^2(\mathbb{\mathbb{Z}})$ and $\sigma\equiv(\sigma_n)_{n\in\mathbbm{N}}$ is a given bounded sequence with $0<\sigma_{n+1}<\sigma_n$ $\forall n\in\mathbb{N}$ and $\lim_{n\to\infty}\sigma_n=0$.
 $\mathcal{R}$ is indeed compact and injective. However, 
\begin{equation}\label{eq:RN}
 \mathcal{R}_{2N+1}\;:=\;
 \begin{pmatrix}
  0        & \cdots   & \cdots & \cdots   & 0\\
  \sigma_{-N} & 0        & \cdots & \cdots   & 0 \\
  0        & \sigma_{-N+1} & 0      & \cdots   & 0\\
  \vdots   & \vdots   & \ddots & \ddots   & 0\\
  0        & 0        & \cdots & \sigma_{N-1} & 0
 \end{pmatrix}
\end{equation}
is singular irrespectively of $N\in\mathbb{N}$, with $\ker\mathcal{R}_{2N+1}=\mathrm{span}\{e_N\}$. (See Lemma \ref{lem:bad_truncations} below for a more general perspective on such an example.) 
Noticeably, $\sigma(\mathcal{R}_{2N+1})=\{0\}$ for each $N$, (explicitly, the vector with all null components but the last one is precisely the eigenvector spanning the kernel of $\mathcal{R}_{2N+1}$):
the erroneous eigenvalue zero that arises in the limit $N\to\infty$ is a typical manifestation of the phenomenon known as \emph{spectral pollution}.
\end{example}

\begin{remark}
 In the same spirit, it is not difficult to produce variations of the above example where the matrix $A_N$ is, say, alternatingly singular and non-singular as $N\to\infty$. 
 Of course, on the other hand, it may also well happen that the truncated matrix is always non-singular: the truncation of the multiplication operator $M=\sum_{n=1}^\infty\frac{1}{n}\,|e_n\rangle\langle e_n|$ on $\ell^2(\mathbb{N})$ with respect to  $(e_n)_{n\in\mathbb{N}}$ yields the matrix $M_N=\mathrm{diag}(1,\frac{1}{2},\dots,\frac{1}{N})$, which is a $\mathbb{C}^N\to\mathbb{C}^N$ bijection for every $N$.
\end{remark}

In fact, `bad' truncations  are always possible, as the following mechanism shows.

\begin{lemma}\label{lem:bad_truncations}
Let $\cH$ be a separable Hilbert space with $\dim\cH=\infty$, and let $A\in\mathcal{B}(\cH)$. There always exist two orthonormal systems $(u_n)_{n \in \mathbb{N}}$ and $(v_n)_{n \in \mathbb{N}}$ of $\cH$ such that the corresponding truncated matrix $A_N$ defined as in \eqref{eq:matrixAN} is singular for every $N\in\mathbb{N}$.
\end{lemma}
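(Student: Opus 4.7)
The proof calls for a simultaneous construction of the two orthonormal bases that must reconcile two competing demands: making every truncated matrix $A_N$ singular, and ensuring that both sequences actually span dense subspaces of $\cH$. I will handle this by a dovetailed induction in which odd and even steps play complementary roles. Pick in advance, using separability, a countable dense sequence $(w_m)_{m\in\mathbb{N}}$ of nonzero vectors of $\cH$.

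At an odd step $n=2m-1$ I first set $u_n$ to be the normalization of $w_m-P_{n-1}w_m$, or any unit vector orthogonal to $u_1,\dots,u_{n-1}$ if this projection vanishes, so that $w_m\in\mathrm{span}\{u_1,\dots,u_n\}$. Then I pick $v_n$ to be any unit vector orthogonal to the finite set $\{v_1,\dots,v_{n-1},Au_1,\dots,Au_n\}$; such a $v_n$ exists because the orthogonal complement of these $2n-1$ vectors has infinite dimension. By construction $\langle v_n,Au_j\rangle=0$ for every $j\leq n$, i.e.\ the last row of $A_n$ is zero, so $A_n$ is singular.

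At an even step $n=2m$ I reverse the roles. First I pick $u_n$ as any unit vector orthogonal to $\{u_1,\dots,u_{n-1},A^{*}v_{n-1}\}$. This forces $\langle v_{n-1},Au_n\rangle=\langle A^{*}v_{n-1},u_n\rangle=0$; combined with the fact that row $n-1$ of $A_{n-1}$ was already zero from the preceding odd step, this makes row $n-1$ of the new matrix $A_n$ zero as well, so $A_n$ is singular. Then I set $v_n$ to be the normalization of $w_m-Q_{n-1}w_m$ (or any unit vector orthogonal to $v_1,\dots,v_{n-1}$ if that vanishes), so that $w_m\in\mathrm{span}\{v_1,\dots,v_n\}$.

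Orthonormality of both sequences is automatic from the construction. Completeness follows because every $w_m$ enters the span of $(u_n)$ at step $2m-1$ and the span of $(v_n)$ at step $2m$, and $(w_m)$ is dense in $\cH$; hence both are orthonormal bases. Singularity of $A_N$ is witnessed for every $N$ by a zero row, namely row $N$ when $N$ is odd and row $N-1$ when $N$ is even. The existence of unit vectors meeting the finitely many orthogonality constraints imposed at each step is the only point to watch, and it is immediate from $\dim\cH=\infty$. The main obstacle I anticipated before fixing on this scheme was precisely the coupling between singularity and completeness; the resolution is to delegate the density task at each step to whichever of $u_n$ or $v_n$ is not already burdened with the singularity constraint, so that no case split on the kernels of $A$ or $A^{*}$ is ever needed.
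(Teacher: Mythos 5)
Your proof is correct, and it takes a genuinely different route from the paper's. The paper fixes $(u_n)_{n\in\mathbb{N}}$ once and for all as an \emph{arbitrary} orthonormal basis and then builds only $(v_n)_{n\in\mathbb{N}}$ inductively, forcing at each step $v_N\perp\mathrm{ran}(AP_N)$ and $v_N\perp\mathrm{span}\{v_1,\dots,v_{N-1}\}$ so that the last row of $A_N$ vanishes; singularity is thus always witnessed by the zero $N$-th row. You instead interleave the construction of both sequences, alternating which of the two carries the singularity constraint (row $N$ zero for odd $N$, row $N-1$ zero for even $N$, the latter via $u_N\perp A^*v_{N-1}$) and which one absorbs a prescribed dense sequence $(w_m)$ into its span. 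The extra machinery buys you something real: the paper's argument only produces $(v_n)$ as an orthonormal \emph{system} and never verifies that it is complete, whereas the lemma asserts the existence of two orthonormal \emph{bases}; indeed, with $A=\mathbb{O}$ the paper's constraints permit choosing all $v_N$ inside a fixed proper closed subspace, so completeness is not automatic there. Your dovetailing closes that gap explicitly. The paper's approach is shorter and has the mild advantage that $(u_n)$ can be \emph{any} prescribed basis, which you give up by tying the $u_n$'s to the density bookkeeping; both constructions rest on the same elementary fact that in an infinite-dimensional space one can always find a unit vector orthogonal to finitely many given vectors.
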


\begin{proof}
Let us pick an arbitrary orthonormal system $(u_n)_{n \in \mathbb{N}}$ and construct the other system $(v_n)_{n \in \mathbb{N}}$ inductively. When $N=1$, it suffices to choose $v_1$ such that $v_1\perp A u_1$ and $\|v_1\|_{\cH}=1$. Let now $(v_n)_{n\in\{1,\dots,N-1\}}$ be an orthonormal system in $\cH$ satisfying the thesis up to the order $N-1$ and let us construct $v_N$ so that $(v_n)_{n\in\{1,\dots,N\}}$ satisfies the thesis up to order $N$. To this aim, let us show that a choice of $v_N$ is always possible so that the final row in the matrix $A_N$ has all zero entries. In fact, $(A_N)_{ij} = (Q_NAP_N)_{ij}=\langle v_i, Au_j \rangle$ for $i\in\{1,\dots ,N - 1\}$ and $j\in\{1,\dots,N\}$ and in order for $\langle v_N, Au_j \rangle = 0$ for $j \in\{1,\cdots, N\}$ it suffices to take
\[
 v_N\perp\mathrm{ran}(AP_N)\,,\qquad  v_N\perp\mathrm{ran}\, Q_{N-1}\,,\qquad \|v_N\|_{\cH}=1\,,
\]
where $P_N$ and $Q_{N-1}$ are the orthogonal projections defined in \ref{eq:defPNQN}. Since $\mathrm{ran}(AP_N)$ and $\mathrm{ran} \,Q_{N-1}$ are finite-dimensional subspaces of $\cH$, there is surely a vector $v_N\in \cH$ with the above properties.
\end{proof}

The occurrence described by Lemma \ref{lem:bad_truncations} may happen both with an orthogonal and with an oblique projection scheme, namely both when $P_N=Q_N$ and when $P_N\neq Q_N$ eventually in $N$.

In the standard framework of (Petrov-)Galerkin methods such an occurrence is \emph{explicitly prevented} by suitable assumptions on $A$, typically coercivity \cite[Sect.~2.2]{Ern-Guermond_book_FiniteElements}, \cite[Sect.~4.1]{Quarteroni-book_NumModelsDiffProb}, and by similar assumptions at the truncated level (e.g., \cite[Sect.~3.2, Sect.~4.2]{Ern-Guermond_book_FiniteElements}), so as to ensure the truncated problem \eqref{eq:Ntrunc} to be solvable for all $N$.

As in our discussion we do not exclude a priori such an occurrence, we are compelled to regard $f^{(N)}$ as an approximate solution to the truncated problem, in the sense that
\begin{equation}\label{eq:Af-g-e}
 A_N f^{(N)}\;=\;g_N+\varepsilon^{(N)}\qquad\textrm{for some $\varepsilon^{(N)}\in\mathbb{C}^N$}.
\end{equation}
(We write $\varepsilon^{(N)}$ and not $\varepsilon_N$ because there is no reason to claim that the residual $\varepsilon^{(N)}$ in the $N$-dimensional problem is the actual truncation for every $N$ of the same infinite-dimensional vector $\varepsilon\in\cH$.)

It would be desirable that $\varepsilon^{(N)}$ is indeed small and asymptotically vanishing with $N$, or even that $\varepsilon^{(N)}=0$ for $N$ large enough, as is the case in some applications. Morally (up to passing to the weak formulation of the inverse problem), this is the assumption of \emph{asymptotic consistency} naturally made for approximations by Galerkin methods \cite[Definition 2.15 and Theorem 2.24]{Ern-Guermond_book_FiniteElements}.

In the present abstract context it is worth remarking that an assumption of that sort it is motivated by the following property, whose proof is postponed to Sect.~\ref{sec:boundedlinearinverse}.

\begin{lemma}\label{lem:ANfN-gNsmall}
 Let $A\in\mathcal{B}(\cH)$ and $g\in\mathrm{ran}A$. Let $A_N$ and $g_N$ be defined as in \eqref{eq:fNgN}-\eqref{eq:matrixAN} above, for some orthonormal bases $(u_n)_{n \in \N}$, $(v_n)_{n \in \N}$ of $\cH$. Then there always exists a sequence $(f^{(N)})_{N\in\mathbb{N}}$ such that
 \[
  f^{(N)}\in\mathbb{C}^N\qquad\textrm{ and }\qquad \lim_{N\to \infty}\|A_N f^{(N)}-g_N\|_{\mathbb{C}^N}=0\,.
 \]
\end{lemma}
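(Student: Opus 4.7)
The plan is to exhibit an explicit candidate sequence $(f^{(N)})_{N\in\mathbb{N}}$ built from any genuine preimage of $g$ under $A$, and then show that the residual is controlled by the truncation error of this preimage with respect to the trial basis. This is the natural `Galerkin' candidate.

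More concretely, since $g\in\mathrm{ran}\,A$, fix some $f\in\cH$ with $Af=g$ and define
\[
 f^{(N)}\;:=\;f_N\;=\;\bigl(\langle u_1,f\rangle,\dots,\langle u_N,f\rangle\bigr)^{\!\top}\in\mathbb{C}^N\,,
\]
i.e.~the vector of the first $N$ coordinates of $f$ with respect to $(u_n)_{n\in\mathbb{N}}$. The first computation is entry-wise: using $A_{N;ij}=\langle v_i,A u_j\rangle$ from \eqref{eq:matrixAN} (the outer $Q_N$ is idempotent on the $v_i$'s and $P_N u_j=u_j$), one gets
\[
 (A_N f_N)_i\;=\;\sum_{j=1}^N\langle v_i,Au_j\rangle\langle u_j,f\rangle\;=\;\langle v_i,AP_Nf\rangle\,,\qquad i\in\{1,\dots,N\}\,,
\]
whereas $(g_N)_i=\langle v_i,g\rangle=\langle v_i,Af\rangle$. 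Subtracting,
\[
 (A_N f_N-g_N)_i\;=\;-\langle v_i,A(\mathbbm{1}-P_N)f\rangle\,.
\]

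Next, since $(v_n)_{n\in\mathbb{N}}$ is orthonormal, Bessel's inequality yields
\[
 \|A_N f_N-g_N\|_{\mathbb{C}^N}^2\;=\;\sum_{i=1}^N\bigl|\langle v_i,A(\mathbbm{1}-P_N)f\rangle\bigr|^2\;\leq\;\|A(\mathbbm{1}-P_N)f\|_{\cH}^2\;\leq\;\|A\|_{\mathrm{op}}^2\,\|(\mathbbm{1}-P_N)f\|_{\cH}^2\,.
\]
Finally, the orthonormal system $(u_n)_{n\in\mathbb{N}}$ is assumed complete in $\cH$ (the standing hypothesis under which the approximation setting of Sect.~\ref{sec:finitedimtrunc-setup} is considered), so $P_N\to\mathbbm{1}$ strongly and $\|(\mathbbm{1}-P_N)f\|_{\cH}\to 0$ as $N\to\infty$. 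The boundedness of $A$ then forces $\|A_N f_N-g_N\|_{\mathbb{C}^N}\to 0$, proving the claim.

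There is no real obstacle in the argument: the only point that deserves care is the role of completeness of the trial system $(u_n)_{n\in\mathbb{N}}$. Without it one can only guarantee $\|(\mathbbm{1}-P_N)f\|\to\|(\mathbbm{1}-P_\infty)f\|$, where $P_\infty$ is the projection on $\overline{\mathrm{span}}\{u_n\}$, and the conclusion may fail for a preimage $f$ with a nontrivial component in $(P_\infty\cH)^\perp$. Since the rest of the paper works under the tacit standing assumption that the trial and test systems are complete (consistently with the use of the term `basis' elsewhere, e.g.~in Lemma \ref{lem:bad_truncations}), this is not an issue here.
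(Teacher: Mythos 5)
Your proposal is correct and follows essentially the same route as the paper: both take the Galerkin candidate $f^{(N)}=f_N$ (i.e.\ $\widehat{f^{(N)}}=P_Nf$ for a fixed preimage $f$ of $g$) and reduce the claim to $\|(\mathbbm{1}-P_N)f\|_{\cH}\to 0$, the paper by invoking the strong operator convergence $Q_NAP_N\to A$ established at the start of Section~\ref{sec:boundedlinearinverse}, you by an equivalent entry-wise computation plus Bessel's inequality. Your closing remark on the completeness of $(u_n)_{n\in\mathbb{N}}$ correctly identifies the same tacit hypothesis the paper uses (its strong-convergence argument likewise needs $P_N\to\mathbbm{1}$ and $Q_N\to\mathbbm{1}$ strongly).
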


In other words, there do exist approximate solutions $f^{(N)}$ to \eqref{eq:Ntrunc} actually satisfying \eqref{eq:Af-g-e} with $\|\varepsilon^{(N)}\|_{\mathbb{C}^N}\to 0$ as $N\to\infty$, so that \eqref{eq:Af-g-e} is asymptotically consistent.

%



\subsection{Convergence of the truncated problems}~

Depending on the context, the vanishing of the displacement $f-\widehat{f^{(N)}}$ as $N\to\infty$ is monitored in various forms with respect to the Hilbert norm of $\cH$, the two most typical ones are the norms of the \emph{infinite-dim\-en\-sion\-al error} $\mathscr{E}_N$ and the \emph{infinite-dim\-en\-sion\-al residual} $\mathfrak{R}_N$, defined respectively as
\begin{equation}\label{eq:error-residual}
 \begin{split}
   \mathscr{E}_N\;&:=\;f-\widehat{f^{(N)}} \\
   \mathfrak{R}_N\;&:=\;A(f-\widehat{f^{(N)}})\;=\;g-A\,\widehat{f^{(N)}}\,.
 \end{split}
\end{equation}
We qualify them as `infinite-dimensional', although we shall drop this additional nomenclature when no confusion arises, in order to distinguish them from the error and residual at fixed $N$, which may be indexed by the number of steps in an iterative algorithm.

A first evident obstruction to the actual vanishing of $\mathscr{E}_N$ when $\dim\cH=\infty$ is the use of a \emph{non-complete} orthonormal system $(u_n)_{n\in\mathbb{N}}$, that is, such that $\mathrm{span}\{u_n\,|\,n\in\mathbb{N}\}$ is not dense in $\cH$.


Truncations with respect to a potentially non-complete orthonormal system might appear unwise, but in certain contexts are natural. One is the vast framework of the Krylov subspace methods \cite{Liesen-Strakos-2003}, where one searches for approximate solutions among the linear combinations of the vectors $g,Ag,A^2g,\dots$ and hence performs the truncation with respect to an orthonormal basis of the \emph{Krylov subspace} 
\begin{equation}\label{eq:defKrylov}
  \mathcal{K}(A,g) \;:=\;  \mathrm{span} \{ A^kg\,|\,k\in\mathbb{N}_0\}
\end{equation}
associated to $A\in\mathcal{B}(\cH)$ and $g\in\cH$. Obviously, when $\dim\mathcal{K}(A,g)=\infty$ the subspace $\mathcal{K}(A,g)$ is neither open nor closed in $\cH$. Its closure can be the whole $\cH$, but also just a \emph{proper} closed subspace of $\cH$.

\begin{example}\label{example:Krylov_dense_or_not}~

 \begin{itemize}
  \item[(i)] For the weighted right-shift operator $\mathcal{R}=\sum_{n=1}^\infty\sigma_{n}|e_{n+1}\rangle\langle e_n|$ on $\ell^2(\mathbb{N})$ and the vector $g=e_{2}$, $\overline{\mathcal{K}(\mathcal{R},e_{2})}=\mathrm{span}\{e_1\}^\perp$, a proper subspace of $\ell^2(\mathbb{N})$. Therefore, the exact solution $f=\frac{1}{\sigma_1}e_1$ to the inverse problem $\mathcal{R}f=g=e_2$ is orthogonal to $\overline{\mathcal{K}(R,e_{m+1})}$, and the truncated problem can only produce approximate solutions $ \widehat{f^{(N)}}\in\mathrm{span}\{e_2,e_3,\dots\}$, whence $\widehat{f^{(N)}}\perp f$ and $\|\widehat{f^{(N)}}-f\|_{\cH}\geqslant\frac{1}{\sigma_1}$.
  \item[(ii)] For the (compact, injective) Volterra integral operator $h\mapsto Vh$ on $L^2[0,1]$ with $(Vh)(x):=\int_0^x h(y)\ud y$,  and the function $g=\mathbf{1}$ (the constant function with value 1), the functions $Vg,V^2g,V^3g,\dots$ are (multiples of) the polynomials $x,x^2,x^3,\dots$, therefore $\mathcal{K}(V,g)$ is the space of polynomials on $[0,1]$, which is dense in $L^2[0,1]$.
 \end{itemize}
\end{example}

In standard (Petrov-)Galerkin methods an occurrence as in Example \ref{example:Krylov_dense_or_not}(i) is ruled out by an ad hoc `\emph{approximability'} assumption \cite[Definition 2.14 and Theorem 2.24]{Ern-Guermond_book_FiniteElements} that can be rephrased as the request that $(u_n)_{n\in\mathbb{N}}$ is indeed an orthonormal \emph{basis} of $\cH$.

The approximability property is known to fail in situations of engineering interest, as is the case for the failure of the Lagrange finite elements in differential problems for electromagnetism \cite[Sect.~2.3.3]{Ern-Guermond_book_FiniteElements}. 

Even when (complete) orthonormal bases of $\cH$ are employed for the truncation, another consequence of the infinite dimensionality is the possibility that error and residual are asymptotically small only in some weaker sense than the customary norm topology of $\cH$.

It is therefore natural to contemplate, next to the strong ($\cH$-norm) convergence vanishing $\|\mathfrak{R}_N\|_{\cH}\to 0$, resp., $\|\mathscr{E}_N\|_{\cH}\to 0$, where obviously
\begin{equation}\label{ENtozeroRNtozero}
 \|\mathfrak{R}_N\|_{\cH}\;\leqslant\;\|A\|_{\mathrm{op}}\,\|\mathscr{E}_N\|_{\cH}\,,
\end{equation}
also the \emph{weak} convergence $\mathfrak{R}_N\rightharpoonup 0$ or $\mathscr{E}_N\rightharpoonup 0$, or even the \emph{component-wise convergence}, namely the vanishing of each component of the vector $\mathfrak{R}_N$ or $\mathscr{E}_N$ with respect to the considered basis.

As we shall elaborate further on:

\emph{a strong control such as $\|\mathfrak{R}_N\|_{\cH}\to 0$ or $\|\mathscr{E}_N\|_{\cH}\to 0$ is \emph{not} generic in truncation schemes that do not obey the more stringent assumptions of Petrov-Galerkin projection methods, and only holds under specific a priori conditions on the linear inverse problem.}

On the other hand, even a mere component-wise vanishing of $\mathscr{E}_N$ is in many respects already satisfactorily informative, for in this case each component of  $\widehat{f^{(N)}}$ (with respect to the basis $(u_n)_{n\in\mathbb{N}}$) approximates the corresponding component of the exact solution $f$.

Thus, as already recalled in the Introduction, for elliptic boundary value problems the standard Galerkin finite element method produces a \emph{strong} vanishing of the error, provided that two crucial conditions are satisfied, namely a \emph{careful} choice of the truncation space and the \emph{coercivity} of the differential operator \cite[Sect.~4.2.3]{Quarteroni-book_NumModelsDiffProb}: when this is the case, the vanishing rate depends on the truncation basis and the regularity of the solution. More generally \cite[Sect.~2.3.1]{Ern-Guermond_book_FiniteElements}, standard Petrov-Galerkin methods give rise to a strong convergence of the approximate solution under the simultaneous validity of uniform stability, uniform boundedness and asymptotic consistency of the linear problem, and approximability by means of the chosen truncation spaces.
When the differential operator is non-coercive, additional sufficient conditions have been studied for the stability of the truncated problem and for the quasi optimality of the discretization scheme \cite{Caorsi-Fernandes-Raffetto-2000,Buffa-Christiansen-2003,Buffa-2005}.

On a related scenario, special classes of linear ill-conditioned problems (rank-deficient and discrete ill-posed problems) can be treated with regularisation methods in which the solution is stabilised \cite{Tikhonov-Arsenin_Ill-posed_problems,Hansen-Illposed-1998}. The most notable regularisation methods, namely the Tikhonov-Phillips method, the Landweber-Fridman iteration method, and the truncated singular value decomposition, produce indeed a strongly vanishing error \cite{Groetsch-1984,Louis-1989}. Yet, when the linear inverse problem $Af=g$ is governed by an infinite-rank compact operator $A$, it can be seen that the conjugate gradient method, as well as $\alpha$-processes (in particular, the method of steepest descent), although having regularising properties, may have strongly divergent error and residual in the presence of noise \cite{Eicke-Louis-Plato-1990} and one is forced to consider weaker forms of convergence. In fact, in \cite{Eicke-Louis-Plato-1990} the presence of component-wise convergence is also alluded to.

\section{The compact linear inverse problem}\label{sec:compactlinearinverse}

Let us now examine, within the framework elaborated in the previous Section, the abstract truncation and convergence scheme for \emph{compact} inverse problems.

When the operator $A$ is compact on $\cH$, it is standard to refer to its `\emph{singular value decomposition}' \cite[Theorem VI.17]{rs1} as the expansion
\begin{equation}\label{eq:singValDec}
  A\;=\;\sum_{n\in J} \sigma_n\,|\psi_n\rangle\langle\varphi_n|\,,
 \end{equation}
 where $n$ runs over $J:=\{1,\dots,M\}$, with $M<\infty$ or $M=\infty$,  $(\varphi_n)_{n}$ and $(\psi_n)_{n}$ are two orthonormal systems of $\cH$, $\sigma_n\geqslant\sigma_{n+1}>0$ for all $n$, and $\sigma_n\to 0$, so that the above series, if infinite, converges in operator norm. 
 
 Let us recall that the injectivity of $A$ is tantamount as $(\varphi_n)_{n\in\mathbb{N}}$ being an orthonormal basis and that $\overline{\mathrm{ran} A}=\cH$ if an only if $(\psi_n)_{n\in\mathbb{N}}$ is an orthonormal basis.

 Let us consider the inverse problem \eqref{eq:gen} for compact \emph{and injective} $A$, $g\in\mathrm{ran} A$, and $\dim\cH=\infty$. The problem is well-defined: there exists a unique $f\in\cH$ such that $Af=g$.

The compactness of $A$ has two noticeable consequences here. First, since $\dim\cH=\infty$, $A$ is invertible on its range only, and cannot have an everywhere defined bounded inverse: $\mathrm{ran}A$ can be dense in $\cH$, as in the case of the Volterra operator on $L^2[0,1]$, or dense in a closed proper subspace of $\cH$, as for the weighted right-shift on $\ell^2(\mathbb{N})$.

Furthermore, $A$ and its compression (in the usual meaning of Sect.~\ref{sec:finitedimtrunc-setup}) are close in a robust sense, as the following standard Lemma shows.

\begin{lemma}\label{lem:Op-norm-conv}
With respect to an infinite-dimensional separable Hilbert space $\cH$, let $A:\cH\to\cH$ be a compact operator and let $(u_n)_{n\in\mathbb{N}}$ and $(v_n)_{n\in\mathbb{N}}$ be two orthonormal bases of $\cH$. Then
\begin{equation}
 \|A-Q_NAP_N\|_{\mathrm{op}}\;\xrightarrow[]{\;N\to\infty\;}\;0\,,
\end{equation}
$P_N$ and $Q_N$ being as usual the orthogonal projections \eqref{eq:defPNQN}.
\end{lemma}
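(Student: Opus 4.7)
The plan is to split the difference $A-Q_NAP_N$ into two pieces that can be controlled separately using the strong convergence of the projections and the compactness of $A$. Specifically, I would write
\[
 A-Q_NAP_N\;=\;(\mathbbm{1}-Q_N)A\;+\;Q_NA(\mathbbm{1}-P_N)\,,
\]
and then use the triangle inequality together with $\|Q_N\|_{\mathrm{op}}\leqslant 1$ to reduce the claim to showing that each of $\|(\mathbbm{1}-Q_N)A\|_{\mathrm{op}}$ and $\|A(\mathbbm{1}-P_N)\|_{\mathrm{op}}$ vanishes as $N\to\infty$.

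The key ingredient is the standard fact that if $K\in\mathcal{B}(\cH)$ is compact and $T_N\to\mathbbm{O}$ strongly with $\sup_N\|T_N\|_{\mathrm{op}}<\infty$, then $T_NK\to\mathbbm{O}$ in operator norm. The reason is that $K$ maps the closed unit ball $B$ of $\cH$ into a relatively compact set $\overline{K(B)}$, and on compact sets strong convergence of a bounded sequence of operators is uniform, so $\sup_{\xi\in B}\|T_NK\xi\|_{\cH}\to 0$. Since $(v_n)_{n\in\mathbb{N}}$ is an orthonormal basis, Parseval's identity gives $Q_N\eta\to\eta$ for all $\eta\in\cH$, hence $(\mathbbm{1}-Q_N)\to\mathbbm{O}$ strongly, and the above applied to $K=A$ and $T_N=\mathbbm{1}-Q_N$ yields $\|(\mathbbm{1}-Q_N)A\|_{\mathrm{op}}\to 0$.

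For the second piece I would pass to adjoints: $\|A(\mathbbm{1}-P_N)\|_{\mathrm{op}}=\|(\mathbbm{1}-P_N)A^*\|_{\mathrm{op}}$, and now invoke that $A^*$ is compact together with $(\mathbbm{1}-P_N)\to\mathbbm{O}$ strongly (because $(u_n)_{n\in\mathbb{N}}$ is an orthonormal basis), again via the same compact-operator lemma. Combining the two bounds closes the argument.

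The only step that demands a little care is the compact-operator lemma itself, which is a minor but essential point; everything else is bookkeeping. It is worth noting that the completeness of \emph{both} bases $(u_n)_{n\in\mathbb{N}}$ and $(v_n)_{n\in\mathbb{N}}$ is used in an essential way through the strong convergence $P_N\to\mathbbm{1}$ and $Q_N\to\mathbbm{1}$, so the assumption in the statement cannot be relaxed to mere orthonormal systems.
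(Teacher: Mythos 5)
Your argument is correct, and it starts from the same splitting as the paper's proof (your $(\mathbbm{1}-Q_N)A+Q_NA(\mathbbm{1}-P_N)$ is exactly the paper's $(A-Q_NA)+Q_N(A-AP_N)$), but you then handle the two half-estimates by a genuinely different mechanism. The paper reduces to the case of a finite-rank $A$ via the norm-density of finite-rank operators among compacts, and then bounds $\|A-AP_N\|_{\mathrm{op}}$ explicitly through the singular value decomposition of the finite-rank approximant, obtaining $\|A-AP_N\|_{\mathrm{op}}^2\leqslant M\sigma_1^2\max_k\|(\mathbbm{1}-P_N)\varphi_k\|_{\cH}^2$. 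You instead invoke the abstract principle that a uniformly bounded, strongly null sequence $T_N$ satisfies $\|T_NK\|_{\mathrm{op}}\to 0$ for compact $K$ (via total boundedness of $\overline{K(B)}$ and an $\varepsilon$-net argument), applying it once directly to get $\|(\mathbbm{1}-Q_N)A\|_{\mathrm{op}}\to 0$ and once after passing to adjoints, which requires Schauder's theorem that $A^*$ is compact. Your route is more conceptual and immediately generalises to arbitrary uniformly bounded strongly convergent families in place of the projections; the paper's route is more self-contained and elementary, needing only the finite-rank approximation property and Bessel/Parseval, and it yields an explicit quantitative bound in terms of $\|(\mathbbm{1}-P_N)\varphi_k\|_{\cH}$. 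Your closing remark that completeness of both orthonormal systems is essential is also correct and consistent with the paper's Example \ref{example:RtruncatedKrylov}.
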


\begin{proof}
 Upon splitting
 \[
  A-Q_NAP_N\;=\;(A-Q_N A) + Q_N(A-AP_N)
 \]
 it suffices to prove that $\|A-AP_N\|_{\mathrm{op}}\xrightarrow[]{N\to\infty}0$ and $\|A-Q_N A\|_{\mathrm{op}}\xrightarrow[]{N\to\infty}0$.
 Let us prove the first limit (the second being completely analogous).
 
 Clearly, it is enough to prove that $\|A-AP_N\|_{\mathrm{op}}$ vanishes assuming further that $A$ has finite rank. Indeed, the difference $(A-AP_N)-(\widetilde{A}-\widetilde{A}P_N)$, where $\widetilde{A}$ is a finite-rank approximant of the compact operator $A$, is controlled in operator norm by $2\|A-\widetilde{A}\|_{\mathrm{op}}$ and hence can be made arbitrarily small.
 
 Thus, we consider non-restrictively $A=\sum_{k=1}^M\sigma_k|\psi_k\rangle\langle \varphi_k|$ for some integer $M$, where $(\varphi_k)_{k=1}^M$ and $(\psi_k)_{k=1}^M$ are two orthonormal systems, and $0<\sigma_M\leqslant\cdots\leqslant\sigma_1$. Now, for a generic $\xi=\sum_{n=1}^\infty\xi_n v_n\in\cH$ one has
 \[
  \begin{split}
   \big\|(A&-AP_N)\xi\big\|_{\cH}^2\;=\;\Big\|\sum_{k=1}^M \sigma_k\Big( \sum_{n=N+1}^\infty \xi_n \langle\varphi_k,v_n\rangle\Big)\psi_k\Big\|_{\cH}^2 \\
   &=\;\sum_{k=1}^M \sigma_k^2\,\Big| \sum_{n=N+1}^\infty \xi_n \langle\varphi_k,v_n\rangle\,\Big|^2\;\leqslant\;\|\xi\|_{\cH}^2\sum_{k=1}^M \sigma_k^2\,\big\|(\mathbbm{1}-P_N)\varphi_k\big\|_{\cH}^2\,,
  \end{split}
 \]
 therefore 
 \[
  \big\|A-AP_N\big\|_{\mathrm{op}}^2\;\leqslant\;M\,\sigma_1^2\cdot\max_{k\in\{1,\dots,M\}}\big\|(\mathbbm{1}-P_N)\varphi_k\big\|_{\cH}^2 \;\xrightarrow[]{\;N\to\infty\;}\;0\,,
 \]
since the above maximum is taken over $M$ (hence, finitely many) quantities, each of which vanishes as $N\to\infty$.
\end{proof}

In the following Theorem we describe the generic behaviour of well-defined compact inverse problem.

\begin{theorem}\label{prop:compact1}
 Consider 
 \begin{itemize}
  \item the linear inverse problem $Af=g$ in a separable Hilbert space $\cH$ for some compact and injective $A:\cH\to\cH$  and some $g\in\mathrm{ran}A$;
  \item the finite-dimensional truncation $A_N$ obtained by compression with respect to the orthonormal bases $(u_n)_{n\in\mathbb{N}}$ and $(v_n)_{n\in\mathbb{N}}$ of $\cH$.
 \end{itemize}
Let $(f^{(N)})_{N\in\mathbb{N}}$ be a sequence of approximate solutions to the truncated problems in the quantitative sense
\[
 A_N f^{(N)}\;=\;g_N+\varepsilon^{(N)}\,,\qquad f^{(N)},\varepsilon^{(N)}\in\mathbb{C}^N\,,\qquad \|\varepsilon^{(N)}\|_{\mathbb{C}^N}\;\xrightarrow[]{\;N\to\infty\;}\;0
\]
for every (sufficiently large) $N$. If $\widehat{f^{(N)}}$ is $\cH$-norm bounded uniformly in $N$, then
\[
   \|\mathfrak{R}_N\|_{\cH}\to 0\qquad\textrm{and}\qquad \mathscr{E}_N\rightharpoonup 0\qquad\textrm{as}\;\; N\to \infty\,.
 \]
%
%
\end{theorem}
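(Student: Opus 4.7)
The plan is to recast the given quantitative bound on $\varepsilon^{(N)}$ as a bound on $Q_N\mathfrak{R}_N$ in $\cH$, promote it to a bound on the full residual $\mathfrak{R}_N$ by means of Lemma \ref{lem:Op-norm-conv} together with the uniform norm-bound on $\widehat{f^{(N)}}$, and then extract the weak vanishing of the error from the strong vanishing of $A\mathscr{E}_N$ by a weak-compactness plus injectivity argument.

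For the residual, under the canonical identification $Q_N\cH\cong\mathbb{C}^N$, formulas \eqref{eq:matrixAN}--\eqref{eq:Ntrunc} yield
\begin{equation*}
 \|\varepsilon^{(N)}\|_{\mathbb{C}^N}\;=\;\|A_Nf^{(N)}-g_N\|_{\mathbb{C}^N}\;=\;\|Q_N(A\widehat{f^{(N)}}-g)\|_{\cH}\;=\;\|Q_N\mathfrak{R}_N\|_{\cH},
\end{equation*}
so $\|Q_N\mathfrak{R}_N\|_{\cH}\to 0$ by hypothesis. Splitting $\mathfrak{R}_N=Q_N\mathfrak{R}_N+(\mathbbm{1}-Q_N)\mathfrak{R}_N$ and expanding the definition of $\mathfrak{R}_N$ in the second summand, I get
\begin{equation*}
 \|\mathfrak{R}_N\|_{\cH}\;\leqslant\;\|Q_N\mathfrak{R}_N\|_{\cH}+\|(\mathbbm{1}-Q_N)g\|_{\cH}+\|(\mathbbm{1}-Q_N)A\widehat{f^{(N)}}\|_{\cH}.
\end{equation*}
The second summand vanishes by completeness of $(v_n)_{n\in\mathbb{N}}$; the third is controlled by $\|A-Q_NA\|_{\mathrm{op}}\cdot\sup_N\|\widehat{f^{(N)}}\|_{\cH}$, whose first factor tends to $0$ by the first half of the proof of Lemma \ref{lem:Op-norm-conv} (this is where the compactness of $A$ is used) and whose second factor is finite by assumption. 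Hence $\|\mathfrak{R}_N\|_{\cH}\to 0$.

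For the error, the sequence $(\mathscr{E}_N)_{N\in\mathbb{N}}$ is $\cH$-bounded, since $\|\mathscr{E}_N\|_{\cH}\leqslant\|f\|_{\cH}+\sup_N\|\widehat{f^{(N)}}\|_{\cH}<\infty$. Moreover $A\mathscr{E}_N=Af-A\widehat{f^{(N)}}=g-A\widehat{f^{(N)}}=\mathfrak{R}_N$, which by the previous step vanishes in $\cH$-norm. By weak sequential compactness of $\cH$-bounded sets, every subsequence of $(\mathscr{E}_N)_{N\in\mathbb{N}}$ admits a sub-subsequence $\mathscr{E}_{N_k}\rightharpoonup\eta$ for some $\eta\in\cH$; weak continuity of the bounded operator $A$ yields $A\mathscr{E}_{N_k}\rightharpoonup A\eta$, and comparing with the already-established $A\mathscr{E}_{N_k}\to 0$ in norm forces $A\eta=0$, whence $\eta=0$ by injectivity of $A$. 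Since the only possible subsequential weak limit is $0$, the whole sequence satisfies $\mathscr{E}_N\rightharpoonup 0$.

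The main obstacle to be aware of is the third summand in the residual decomposition: the proof consumes the uniform boundedness hypothesis on $\widehat{f^{(N)}}$ precisely there, converting the mere operator-norm convergence provided by Lemma \ref{lem:Op-norm-conv} into an actual $\cH$-norm vanishing of $(\mathbbm{1}-Q_N)A\widehat{f^{(N)}}$; dropping that hypothesis would break the argument. It is also worth noting that the weak, rather than strong, character of the error vanishing is intrinsic, for $A$ compact on an infinite-dimensional space has unbounded inverse on $\mathrm{ran}A$, so no norm control on $\mathfrak{R}_N$ can be transferred to a norm control on $\mathscr{E}_N$ in full generality.
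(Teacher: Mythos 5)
Your argument is correct. The residual half is, up to a sign and regrouping, exactly the paper's decomposition: since $P_N\widehat{f^{(N)}}=\widehat{f^{(N)}}$, your three summands $Q_N\mathfrak{R}_N$, $(\mathbbm{1}-Q_N)g$, $(\mathbbm{1}-Q_N)A\widehat{f^{(N)}}$ are precisely the terms $Q_NAP_N\widehat{f^{(N)}}-Q_Ng$, $Q_Ng-g$, $(A-Q_NAP_N)\widehat{f^{(N)}}$ of the paper's splitting (*), and you consume Lemma \ref{lem:Op-norm-conv} and the uniform bound in the same place the paper does. Where you genuinely diverge is the error. The paper expands $f$ and $\widehat{f^{(N)}}$ in the singular basis $(\varphi_n)_{n\in\mathbb{N}}$ of $A$, reads off from $\sum_n\sigma_n^2|f_n^{(N)}-f_n|^2\to 0$ that $\mathscr{E}_N\rightsquigarrow 0$ component-wise in that basis, and then upgrades to $\mathscr{E}_N\rightharpoonup 0$ via the uniform bound and \eqref{eq:equiv_weak}. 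You instead use weak sequential compactness of bounded sets, the weak-to-weak continuity of $A$, and injectivity to show that $0$ is the only subsequential weak limit of $(\mathscr{E}_N)_N$. Both are sound; your route buys generality (it uses only boundedness and injectivity of $A$, not the singular value decomposition, so it transfers verbatim to any bounded injective problem for which $\|\mathfrak{R}_N\|_{\cH}\to 0$ has been established, in the spirit of the discussion after Theorem \ref{thm:boundedinvprob}), at the price of being non-constructive and of not yielding the intermediate component-wise statement $\mathscr{E}_N\rightsquigarrow 0$ with respect to $(\varphi_n)_{n\in\mathbb{N}}$ that the paper's computation records explicitly.
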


\begin{proof}
   We split
 \[\tag{*}
  \begin{split}
   A\widehat{f^{(N)}}- g\;&=\;(A-Q_NAP_N)\widehat{f^{(N)}} \\
   &\qquad +\; Q_NAP_N\widehat{f^{(N)}} - Q_Ng \\
   &\qquad +\; Q_Ng-g\,.
  \end{split}
 \]
By assumption, $\|Q_Ng-g\|_{\cH}\xrightarrow[]{\;N\to\infty\;}0$ and
 \[
  \begin{split}
   \|Q_NAP_N\widehat{f^{(N)}} - Q_Ng\|_{\cH}\;&=\; \| A_N f^{(N)}-g_N\|_{\mathbb{C}^N} \\
   &=\;\|\varepsilon^{(N)}\|_{\mathbb{C}^N}\;\xrightarrow[]{\;N\to\infty\;}\;0\,.
  \end{split}
 \]
 Moreover, Lemma \ref{lem:Op-norm-conv} and the uniform boundedness of $\widehat{f^{(N)}}$ imply
 \[
  \|(A-Q_NAP_N)\widehat{f^{(N)}}\|_{\cH}\;\leqslant\;\|A-Q_NAP_N\|_{\mathrm{op}}\,\|\widehat{f^{(N)}}\|_{\cH}\;\xrightarrow[]{\;N\to\infty\;}\;0
 \]
 Plugging the three limits above into (*) proves $\|\mathfrak{R}_N\|_{\cH}\to 0$.

 Next, in terms of the singular value decomposition \eqref{eq:singValDec} of $A$, where now $(\varphi_n)_{n\in\mathbb{N}}$ is an orthonormal basis of $\cH$, $(\psi_n)_{n\in\mathbb{N}}$ is an orthonormal system, and  $0<\sigma_{n+1}\leqslant\sigma_n$ for all $n\in J$, we write
 \[
  \widehat{f^{(N)}}\;=\;\sum_{n\in\mathbb{N}} f^{(N)}_n\varphi_n\,,\qquad \widehat{f}\;=\;\sum_{n\in\mathbb{N}} f_n\varphi_n\,,
 \]
 whence
 \[
  0\;=\; \lim_{N\to\infty}\|A \widehat{f^{(N)}}-g\|_{\cH}^2\;=\,\lim_{N\to\infty}\:\sum_{n\in J}\sigma_n^2\big| f^{(N)}_n-f_n\big|^2\,.
 \]
 Then necessarily $\widehat{f^{(N)}}$ converges to $f$ component-wise:
 \[
  \langle u_n,\widehat{f^{(N)}}\rangle\xrightarrow[]{\;N\to\infty\;}\langle u_n,f\rangle \quad\forall n\in\mathbb{N}\,.
 \]
  
  On the other hand, $\widehat{f^{(N)}}$ is uniformly bounded in $\cH$, $\sup_{N\in\mathbb{N}}\|\eta_N\|_{\cH}<+\infty$. As well known, latter two conditions are equivalent to the fact that $\widehat{f^{(N)}}$ converges to $f$ weakly  ($\mathscr{E}_N\rightharpoonup 0$).
%
%
 %
%
%
%
%
\end{proof}

Theorem \ref{prop:compact1} provides sufficient conditions for some form of vanishing of the error and the residual.
The key assumptions are: 
\begin{itemize}
 \item \emph{injectivity} of $A$, 
 \item \emph{asymptotic solvability of the truncated problems}, i.e., asymptotic smallness of the finite-dimensional residual $ A_N f^{(N)}-g_N$,
 \item \emph{uniform boundedness of the approximate solutions} $f^{(N)}$.
\end{itemize}
In fact, injectivity was only used in the analysis of the error in order to conclude $\mathscr{E}_N\rightharpoonup 0$; instead, the conclusion $\|\mathfrak{R}_N\|_{\cH}\to 0$ follows irrespectively of injectivity.

It is also worth observing that both the assumption of asymptotic solvability of the truncated problems (which is inspired to Lemma \ref{lem:ANfN-gNsmall}, as commented already) and the assumption of uniform boundedness of the $f^{(N)}$'s are finite-dimensional in nature: their check, from the practical point of view of scientific computing, requires a control of explicit $N$-dimensional vectors for a `numerically satisfactory' amount of integers $N$.

To further understand the impact of such assumptions, a few remarks are in order.

\begin{remark}[Genericity]\label{rem:strongRes_weakErr}
 Under the conditions of Theorem \ref{prop:compact1}, the occurrence of the \emph{strong} vanishing of the residual ($\|\mathfrak{R}_N\|_{\cH}\to 0$) and the \emph{weak} vanishing of the error ($\mathscr{E}_N\rightharpoonup 0$) as $N\to\infty$ is indeed a \emph{generic behaviour}. For example, the compact inverse problem $\mathcal{R}f=0$ in $\ell^2(\mathbb{N})$ associated with the weighted right-shift $\mathcal{R}$  has exact solution $f=0$. The truncated problem $\mathcal{R}_N f^{(N)}=0$ with respect to $\mathrm{span}\{e_1,\dots,e_N \}$, where
 \begin{equation}\label{eq:RRN}
 \mathcal{R}_N\;=\;
 \begin{pmatrix}
  0        & \cdots   & \cdots & \cdots   & 0\\
  \sigma_1 & 0        & \cdots & \cdots   & 0 \\
  0        & \sigma_2 & 0      & \cdots   & 0\\
  \vdots   & \vdots   & \ddots & \ddots   & 0\\
  0        & 0        & \cdots & \sigma_{N-1} & 0
 \end{pmatrix},
\end{equation}
 is solved by the $\mathbb{C}^N$-vectors whose first $N-1$ components are zero, i.e., $\widehat{f^{(N)}}=e_N$. The sequence $(\widehat{f^{(N)}})_{N\in\mathbb{N}}\equiv(e_N)_{N\in\mathbb{N}}$ converges weakly to zero in $\ell^2(\mathbb{N})$, whence indeed $\mathscr{E}_N\rightharpoonup 0$, and also, by compactness, $\|\mathfrak{R}_N\|_{\cH}\to 0$. However, $\|\mathscr{E}_N\|_{\cH}=1$ for every $N$, thus the error cannot vanish in the $\cH$-norm.
\end{remark}

\begin{remark}[`Bad' approximate solutions]\label{rem:Res_small_Err_large}
 The example considered in Remark \ref{rem:strongRes_weakErr} is also instructive to understand that generically one may happen to select `bad' approximate solutions $\widehat{f^{(N)}}$ such that, despite the `good' strong convergence
 $\|A_N f^{(N)}-g_N\|_{\mathbb{C}^N}\to 0$, have the unsatisfactory feature $\|f^{(N)}\|_{\mathbb{C}^N}=\|\widehat{f^{(N)}}\|_{\cH}\to +\infty$: this is the case if one chooses, for instance, $\widehat{f^{(N)}}=N e_N$. Thus, the uniform boundedness of $\widehat{f^{(N)}}$ in $\cH$ required in Theorem \ref{prop:compact1} is \emph{not} redundant. (This is also consistent with the fact that whereas by compactness $\widehat{f^{(N)}}\rightharpoonup f$ implies $\| A\widehat{f^{(N)}}- Af\|\to 0$, yet the opposite implication is not true in general.)
\end{remark}

\begin{remark}[The density of $\mathrm{ran} A$ does not help]
 The genericity of convergence discussed in Remarks \ref{rem:strongRes_weakErr} and \ref{rem:Res_small_Err_large}
 is not improved by additionally requiring that $\overline{\mathrm{ran} A}=\cH$.
 For instance, the inverse problem considered in Example \ref{example:RN} involves an operator that is compact, injective, and with dense range, but the finite-dimensional projections produce \emph{for every $N$} the matrix \eqref{eq:RN} that is singular and for which, therefore, all the considerations of Remarks \ref{rem:strongRes_weakErr} and \ref{rem:Res_small_Err_large} can be repeated verbatim.
\end{remark}


\begin{remark}[`Bad' truncations and `good' truncations]\label{rem:goodtruncation}
 We saw in Lemma \ref{lem:bad_truncations} that `bad' truncations (i.e., leading to matrices $A_N$ that are, eventually in $N$, all singular) are always possible. On the other hand, there always exists a ``good'' choice for the truncation -- although such a choice might not be identifiable explicitly -- which makes the infinite-dimensional residual and error vanish in norm and with no extra assumption of uniform boundedness on the approximate solutions. For instance, in terms of the singular value decomposition \eqref{eq:singValDec} of $A$, it is enough to choose
 \[
  (u_n)_{n\in\mathbb{N}}\;=\;(\varphi_n)_{n\in\mathbb{N}}\,,\qquad  (v_n)_{n\in\mathbb{N}}\;=\;(\psi_n)_{n\in\mathbb{N}}\,,
\]
 in which case $Q_NAP_N=\sum_{n=1}^N\sigma_n|\psi_n\rangle\langle\varphi_n|$ and $A_N=\mathrm{diag}(\sigma_1,\dots,\sigma_N)$, and for given $g=\sum_{n\in\mathbb{N}}g_n\psi_n$ one has  $\widehat{f^{(N)}}=\sum_{n=1}^N\frac{g_n}{\sigma_n}\,\varphi_n$, where the sequence $(\frac{g_n}{\sigma_n})_{n\in\mathbb{N}}$ belongs to $\ell^2(\mathbb{N})$ owing to the assumption $g\in\mathrm{ran}A$,  
 whence
 \[
 \|f-\widehat{f^{(N)}}\|_{\cH}^2\;=\;\sum_{n=N+1}^\infty\Big|\frac{g_n}{\sigma_n} \Big|^2\;\xrightarrow[]{\;N\to\infty\;}\;0\,.
 \]
\end{remark}

\section{The bounded linear inverse problem}\label{sec:boundedlinearinverse}

It is instructive to compare the findings of the previous Section with the more general case of a \emph{bounded} (not necessarily compact) linear inverse problem.

When $\dim\cH=\infty$ and a generic bounded linear operator $A:\cH\to\cH$ is compressed (in the usual sense of Sect.~\ref{sec:finitedimtrunc}) between the spans of the first $N$ vectors of the orthonormal bases $(u_n)_{n\in\mathbb{N}}$ and $(v_n)_{n\in\mathbb{N}}$, then surely $Q_N A P_N\to A$  as $N\to\infty$ in the strong operator topology, that is, $\|Q_N A P_N\psi- A\psi\|_{\cH}\xrightarrow[]{\;N\to\infty\;}0$ $\forall\psi\in\cH$, yet the convergence \emph{may fail to occur in the operator norm}.

The first statement is an obvious consequence of the inequality
\[
 \|(A-Q_N A P_N)\psi\|_{\cH}\;\leqslant\;\|(\mathbbm{1}-Q_N)A\psi\|_{\cH}+\|A\|_{\mathrm{op}}\,\|\psi-P_N\psi\|_{\cH}
\]
valid for any $\psi\in\cH$. The lack of operator norm convergence is also clear for any non-compact $A$: indeed, the operator norm limit of finite-rank operators can only be compact.

For this reason, the control of the infinite-dimensional inverse problem in terms of its finite-dimensional truncated versions is in general less strong.

As a counterpart of Theorem \ref{prop:compact1} above, let us discuss the following generic behaviour of \emph{well-posed} bounded linear inverse problems.

\begin{theorem}\label{thm:boundedinvprob}
 Consider 
 \begin{itemize}
  \item the linear inverse problem $Af=g$ in a Hilbert space $\cH$ for some bounded and injective $A:\cH\to\cH$ and some $g\in\cH$;
  \item the finite-dimensional truncation $A_N$ obtained by compression with respect to the orthonormal bases $(u_n)_{n\in\mathbb{N}}$ and $(v_n)_{n\in\mathbb{N}}$ of $\cH$.
 \end{itemize}
Let $(f^{(N)})_{N\in\mathbb{N}}$ be a sequence of approximate solutions to the truncated problems in the quantitative sense
\[
 A_N f^{(N)}\;=\;g_N+\varepsilon^{(N)}\,,\qquad f^{(N)},\varepsilon^{(N)}\in\mathbb{C}^N\,,\qquad \|\varepsilon^{(N)}\|_{\mathbb{C}^N}\xrightarrow[]{\;N\to\infty\;}0
\]
for every (sufficiently large) $N$. 
\begin{itemize}
 \item[(i)] The residuals $\mathfrak{R}_N$ vanish strongly, or weakly or component-wise as $N\to\infty$ if and only if so do the vectors $(A-Q_NAP_N)\widehat{f^{(N)}}$.
 \item[(ii)] In particular, if $\widehat{f^{(N)}}$ converges strongly in $\cH$, equivalently, if $\|f^{(N)}-f^{(M)}\|_{C^{\max\{N,M\}}}\xrightarrow[]{\;N,M\to\infty\;}0$, then
  \[
  \|\mathscr{E}_N\|_{\cH}\to 0\qquad\textrm{and}\qquad \|\mathfrak{R}_N\|_{\cH}\to 0\qquad\textrm{as }N\to\infty\,.
 \]
\end{itemize}
\end{theorem}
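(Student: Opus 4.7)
The plan is to denote by $h\in\cH$ the strong limit of $\widehat{f^{(N)}}$ (whose existence is equivalent to the Cauchy condition stated in the theorem); by boundedness of $A$ one then automatically has $A\widehat{f^{(N)}}\to Ah$ in norm, so the task reduces to showing $Ah=g$ and, by injectivity, identifying $h$ with $f$.

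To prove $\|\mathfrak{R}_N\|_{\cH}\to 0$, I would reuse the three-term splitting employed in the proof of Theorem \ref{prop:compact1}, namely
\[
A\widehat{f^{(N)}}-g\;=\;(A-Q_NAP_N)\widehat{f^{(N)}}+(Q_NAP_N\widehat{f^{(N)}}-Q_Ng)+(Q_Ng-g).
\]
The third summand vanishes in norm because $(v_n)_{n\in\mathbb{N}}$ is an orthonormal basis; the middle summand has $\cH$-norm equal to $\|A_Nf^{(N)}-g_N\|_{\mathbb{C}^N}=\|\varepsilon^{(N)}\|_{\mathbb{C}^N}$, which tends to zero by hypothesis.

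The hard part will be the first summand, because $\|A-Q_NAP_N\|_{\mathrm{op}}$ need \emph{not} vanish when $A$ is only bounded (e.g.\ $A=\mathbbm{1}$), so Lemma \ref{lem:Op-norm-conv} is unavailable. My proposed workaround is to exploit that $\widehat{f^{(N)}}\in P_N\cH$ already, so $P_N\widehat{f^{(N)}}=\widehat{f^{(N)}}$ and
\[
(A-Q_NAP_N)\widehat{f^{(N)}}\;=\;(\mathbbm{1}-Q_N)A\widehat{f^{(N)}}.
\]
I would then add and subtract $(\mathbbm{1}-Q_N)Ah$ and use $\|\mathbbm{1}-Q_N\|_{\mathrm{op}}\leqslant 1$ to obtain
\[
\|(\mathbbm{1}-Q_N)A\widehat{f^{(N)}}\|_{\cH}\;\leqslant\;\|A\widehat{f^{(N)}}-Ah\|_{\cH}+\|(\mathbbm{1}-Q_N)Ah\|_{\cH},
\]
where both terms on the right vanish as $N\to\infty$: the first by continuity of $A$ applied to $\widehat{f^{(N)}}\to h$, the second because $Q_N\to\mathbbm{1}$ strongly on the fixed vector $Ah$. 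In short, operator-norm control of $A-Q_NAP_N$ is replaced here by strong convergence applied to a sequence that is already strongly convergent.

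Putting the three limits together gives $\|\mathfrak{R}_N\|_{\cH}\to 0$, i.e.\ $A\widehat{f^{(N)}}\to g$ in norm; combined with $A\widehat{f^{(N)}}\to Ah$ this forces $Ah=g$, so in particular $g\in\mathrm{ran}\,A$, and the injectivity of $A$ yields $h=f$. Consequently $\widehat{f^{(N)}}\to f$ in $\cH$-norm, which is precisely $\|\mathscr{E}_N\|_{\cH}\to 0$.
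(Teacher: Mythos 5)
Your proposal is correct and follows essentially the same route as the paper's proof: the same three-term splitting, the same treatment of the $\varepsilon^{(N)}$ and $Q_Ng-g$ terms, and the same identification $Ah=g$, $h=f$ via continuity and injectivity at the end. The only cosmetic difference is in the first summand, where you use $P_N\widehat{f^{(N)}}=\widehat{f^{(N)}}$ to reduce it to $(\mathbbm{1}-Q_N)A\widehat{f^{(N)}}$, whereas the paper bounds $\|(A-Q_NAP_N)\widehat{f^{(N)}}\|_{\cH}$ by $\|(A-Q_NAP_N)\widetilde{f}\|_{\cH}+2\|A\|_{\mathrm{op}}\|\widetilde{f}-\widehat{f^{(N)}}\|_{\cH}$ and invokes the strong operator convergence $Q_NAP_N\to A$ — the two arguments are equivalent.
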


\begin{proof}
 Since
 \[\tag{*}
  \begin{split}
   A\widehat{f^{(N)}}- g\;&=\;(A-Q_NAP_N)\widehat{f^{(N)}} \\
   &\qquad +\; Q_NAP_N\widehat{f^{(N)}} - Q_Ng \\
   &\qquad +\; Q_Ng-g\,,
  \end{split}
 \]
 and since by assumption $\|Q_Ng-g\|_{\cH}\xrightarrow[]{\;N\to\infty\;}0$ and 
 \[
  \begin{split}
   \|Q_NAP_N\widehat{f^{(N)}} - Q_Ng\|_{\cH}\;&=\; \| A_N f^{(N)}-g_N\|_{\mathbb{C}^N} \\
   &=\;\|\varepsilon^{(N)}\|_{\mathbb{C}^N}\;\xrightarrow[]{\;N\to\infty\;}\;0\,,
  \end{split}
 \]
 then the strong, weak, or component-wise vanishing of $A\widehat{f^{(N)}}- g$ is tantamount as, respectively, the strong, weak, or component-wise vanishing of $(A-Q_NAP_N)\widehat{f^{(N)}}$.

 Since in addition $\|\widehat{f^{(N)}}-\widetilde{f}\|_{\cH}\xrightarrow[]{\;N\to\infty\;} 0$ for some $\widetilde{f}\in\cH$, then
 \[
  \begin{split}
  \|(A-Q_NAP_N)\widehat{f^{(N)}}\|_{\cH}\;&\leqslant\;\|(A-Q_NAP_N)\widetilde{f}\|_{\cH}+2\,\|A\|_{\mathrm{op}}\,\|\widetilde{f}-\widehat{f^{(N)}}\|_{\cH} \\
  &\qquad\xrightarrow[]{\;N\to\infty\;}\;0
  \end{split} 
  \]
 (the first summand in the r.h.s.~above vanishing due to the operator strong convergence $Q_NAP_N\to A$), and  (*) thus implies $ \|\mathfrak{R}_N\|_{\cH}=\|A\widehat{f^{(N)}}-g\|_{\cH}\xrightarrow[]{\;N\to\infty\;}0$.

 Thus, part (i) is proved. Under the assumption of part (ii), one has $A\widehat{f^{(N)}}\to g$ (owing to (i)) and $A\widehat{f^{(N)}}\to A\widetilde{f}$ (by continuity), whence $A\widetilde{f}=g=Af$ and also (by injectivity) $f=\widetilde{f}$. This shows that $\|\mathscr{E}_N\|_{\cH}=\|f-\widehat{f^{(N)}}\|_{\cH}=\|\widetilde{f}-\widehat{f^{(N)}}\|_{\cH}\to 0$. 
\end{proof}

We observe that also here injectivity was only used in the analysis of the error, whereas it is not needed to conclude that $\|\mathfrak{R}_N\|_{\cH}\to 0$.

As compared to Theorem \ref{prop:compact1}, Theorem \ref{thm:boundedinvprob} now relies on the following hypotheses:
\begin{itemize}
 \item \emph{injectivity} of $A$, 
 \item \emph{asymptotic solvability of the truncated problems},
 \item \emph{convergence of the approximate solutions} $f^{(N)}$.
\end{itemize}
Once again, the last two assumptions are finite-dimensional in nature and from a numerical perspective their check involves a control of explicit $N$-dimensional vectors for satisfactorily many $N$'s.

\begin{remark}
In passing from a (well-defined) \emph{compact} to a generic (well-defined) \emph{bounded} inverse problem, one has to strengthen the hypothesis of uniform boundedness of the $\widehat{f^{(N)}}$'s to their actual strong convergence, in order for the residual $\mathfrak{R}_N$ to vanish strongly (in which case, as a by-product, also the error $\mathscr{E}_N$ vanishes strongly). In the compact case, $A-Q_NAP_N\to\mathbb{O}$ in operator norm (Lemma \ref{lem:Op-norm-conv}), and it suffices that the $\widehat{f^{(N)}}$'s be uniformly bounded (or, in principle, have increasing norm $\|\widehat{f^{(N)}}\|_{\cH}$ compensated by the vanishing of $\|A-Q_NAP_N\|_{\mathrm{op}}$), in order for $\|\mathfrak{R}_N\|_{\cH}\to 0$. 
\end{remark}

When instead the sequence of the $\widehat{f^{(N)}}$'s \emph{does not} converge strongly, in general one has to expect only weak vanishing of the residual, $\mathfrak{R}_N\rightharpoonup 0$, which in turn prevents the error to vanish strongly -- for otherwise $\|\mathscr{E}_N\|_{\cH}\to 0$ would imply $\|\mathfrak{R}_N\|_{\cH}\to 0$, owing to \eqref{ENtozeroRNtozero}. The following example shows such a possibility.

\begin{example}\label{example:convergence_needed}
 For the right-shift $R=\sum_{n=1}^\infty|e_{n+1}\rangle\langle e_n|$ on $\ell^2(\mathbb{N})$, the inverse problem $Rf=g$ with $g=0$ admits the unique solution $f=0$. The truncated finite-dimensional problems induced by the bases $(u_n)_{n\in\mathbb{N}}=(v_n)_{n\in\mathbb{N}}=(e_n)_{n\in\mathbb{N}}$
 is governed by the sub-diagonal matrix $R_N$ given by \eqref{eq:RRN} with $\sigma_j=1$ $\forall j\in\mathbb{N}$.
 Let us consider the sequence $(\widehat{f^{(N)}})_{N\in\mathbb{N}}$ with $\widehat{f^{(N)}}:=e_N$ for each $N$. 
 Then:
 \begin{itemize}
  \item $R_N f^{(N)}=0=g_N$ (the truncated problems are solved exactly),
  \item $\widehat{f^{(N)}}\rightharpoonup 0$ (only weakly, not strongly),
  \item $\mathfrak{R}_N=g-R\widehat{f^{(N)}}=-e_{N+1}\rightharpoonup 0$ (only weakly, not strongly).
 \end{itemize}
\end{example}

Of course, what discussed so far highlights features of generic bounded inverse problems (as compared to compact ones). Ad hoc analyses for special classes of bounded inverse problems are available and complement the picture of Theorem \ref{thm:boundedinvprob}. This is the case, to mention one, when $A$ is an \emph{algebraic operator}, namely $p(A)=\mathbbm{O}$ for some polynomial $p$ (which includes finite-rank $A$'s) and one treats the inverse problem with the generalised minimal residual method (GMRES) \cite{Gasparo-Papini-Pasquali-2008}.

In retrospect, based on the reasoning of this Section we prove Lemma \ref{lem:ANfN-gNsmall}.

\begin{proof}[Proof of Lemma \ref{lem:ANfN-gNsmall}]
 Let $f$ satisfy $Af=g$. The sequence $(f^{(N)})_{N\in\mathbb{N}}$ defined by
 \[
  f^{(N)}\;:=\;(P_N f)_N\;=\;f_N\qquad \textrm{(that is, $\;\widehat{f^{(N)}}=P_N f$)}
 \]
 does the job, as a straightforward consequence of the fact, argued already at the beginning of this Section, that $Q_N A P_N\to A$ strongly in the operator topology. Indeed, 
 \[
  \begin{split}
   \|A_N &f^{(N)}-g_N\|_{\mathbb{C}^N}\;=\;\|Q_NAP_N\widehat{f^{(N)}}-Q_N g\|_{\cH} \\
   &\leqslant\;\|(Q_NAP_N -A) f\|_{\cH}+\|(1-Q_N)Af\|_{\cH}\,.
  \end{split}
 \]
The strong limit yields the conclusion.
\end{proof}

\section{Comparison to conjugate gradient schemes}\label{sec:conjgrad}

In this Section we further discuss the scope of Theorems \ref{prop:compact1} (compact case) and \ref{thm:boundedinvprob} (bounded case) in application to \emph{conjugate gradient} schemes for bounded, self-adjoint, positive semi-definite linear inverse problems \cite[Chapt.~7]{Engl-Hanke-Neubauer-1996}, \cite[Sect.~9.3.2]{Ern-Guermond_book_FiniteElements}, \cite[Sect.~7.2.2]{Quarteroni-book_NumModelsDiffProb}. Thus, throughout this Section $A=A^*\in\mathcal{B}(\cH)$ and $A\geqslant\mathbb{O}$, i.e., $\langle h,Ah\rangle\geqslant 0$ $\forall h\in\cH$.

In particular, we recognise the key role played by the assumption of \emph{uniform boundedness} (or even \emph{strong convergence}) of the finite-dimensional approximants.

When $A$ is taken as above, and $g\in\mathrm{ran}A$, the problem $Af=g$ admits solution(s) in $\cH$, which form the closed, convex, non-empty `solution manifold'
\begin{equation}
 \mathcal{S}(A,g)\;:=\;\{f\in\cH\,|\,Af=g\}\,.
\end{equation}
Clearly, if $A$ is injective, which in this case amounts to $A$ being positive definite, then $ \mathcal{S}(A,g)$ only consists of one element. Moreover, any $f\in\mathcal{S}(A,g)$ is variationally characterised as
\begin{equation}\label{eq:energy-functional}
 \Phi[f]\;=\;\min_{h\in\cH}\Phi[h]\,,\qquad \Phi[h]\;:=\; \mathfrak{Re}\big(\langle h,Af\rangle-2\langle h,g\rangle\big)\,.
\end{equation}
As $A\geqslant\mathbb{O}$, the above minimisation can be equivalently taken with respect to the functional $\widetilde{\Phi}[h]:=\langle h-f,A(h-f)\rangle$, the so-called `energy (semi-)norm' of the difference $h-f$.

Now, in the framework of conjugate gradient schemes one builds a sequence $(f^{[N]})_{N\in\mathbb{N}_0}$, the so-called `\emph{conjugate gradient iterates}', by taking the initial guess $f^{[0]}$ arbitrarily in $\cH$, and $f^{[N]}$, for $N\geqslant 1$, to be the minimiser of the problem
 \begin{equation}\label{eq:conjgradmin}
  \min_{h\in\mathcal{Q}_N}\Phi[h]\,,\qquad 
  \begin{array}{rcl}
   \mathcal{Q}_N\!\! & := &\!\! \{f^{[0]}\}+\mathrm{span}\{r_0,A r_0,\dots, A^{N-1} r_0\} \\
   r_0\!\! & := & \!\!Af^{[0]}-g\,.
  \end{array}
 \end{equation}
 Here `iterates' refers to the fact that the $f^{[N]}$'s can be equivalently obtained by means of certain iterative algebraic procedures 
 \cite{Hestenes-Stiefer-ConjGrad-1952,Nemirovskiy-Polyak-1985,Saad-2003_IterativeMethods,Liesen-Strakos-2003}.
 
 The notation for the superscript in $f^{[N]}$ is chosen to avoid confusion with the special meaning already reserved to $f^{(N)}$ and $\widehat{f^{(N)}}$ in the general setting of Sect.~\ref{sec:finitedimtrunc-setup}, although it is clear that the $f^{[N]}$'s here are to be considered on the same conceptual footing as the $\widehat{f^{(N)}}$'s, that is, they can be naturally regarded as \emph{approximate} solutions, expected to satisfy $Af^{[N]}\approx g$ in a suitable sense. This is suggested by the very construction \eqref{eq:conjgradmin} and the variational characterisation \eqref{eq:energy-functional} of the solution(s) $f$. In fact, with the non-restrictive choice $f^{[0]}=0$ the minimisation manifold $\mathcal{Q}_N$ in \eqref{eq:conjgradmin} is precisely the $N$-th order Krylov subspace associated with $A$ and $g$.

 That the above expectation is correct is expressed in rigorous terms by Theorem \ref{thm:Nemirovski} below, a classical result by Nemirovskiy and Polyak \cite{Nemirovskiy-Polyak-1985} (with a precursor version by Kammerer and Nashed \cite{Kammerer-Nashed-1972}), and discussed in more recent terms in \cite[Sect.~7.2]{Engl-Hanke-Neubauer-1996}, \cite[Sect.~3.2]{Hanke-ConjGrad-1995}, and \cite{CM-Nemi-unbdd-2019}. In order to state it, let us introduce the map $\mathcal{P}_{\mathcal{S}}:\cH\to\mathcal{S}(A,g)$ that associates to a point $h\in\cH$ the nearest point $\mathcal{P}_{\mathcal{S}}h$ of the solution manifold. Then one has the following.
 
 
 \begin{theorem}\label{thm:Nemirovski}\emph{(Nemirovskiy and Polyak \cite[Theorem 7]{Nemirovskiy-Polyak-1985}.)}~
 
 \noindent Let $A=A^*\in\mathcal{B}(\cH)$ with $\langle h,Ah\rangle\geqslant 0$ $\forall h\in\cH$, and let the sequence $(f^{[N]})_{N\in\mathbb{N}_0}$ in $\cH$ be defined by \eqref{eq:conjgradmin} above. Then
  \begin{equation}\label{eq:Nemirovski}
\lim_{N\to\infty}\|f^{[N]}-\mathcal{P}_{\mathcal{S}}f^{[0]}\|_{\cH}\,=\,0\,,
  \end{equation}
and moreover, for every $\gamma>0$ such that the problem $A^{\gamma/2}u=f^{[0]}-\mathcal{P}_{\mathcal{S}}f^{[0]}$ admits a solution $u\in\cH$, one has
 \begin{equation}\label{eq:Nemirovski2}
  \|f^{[N]}-\mathcal{P}_{\mathcal{S}}f^{[0]}\|_{\cH}\;\leqslant\;C_{f^{[0]},\gamma} \cdot N^{-\gamma}
 \end{equation}
 for some constant $C_{f^{[0]},\gamma}>0$ depending on $f^{[0]}$ and $\gamma$.
 \end{theorem}

 When $A$ is injective and hence $\mathcal{S}(A,g)$ only consists of the unique solution $f$ to $Af=g$, \eqref{eq:Nemirovski} reads $\|f^{[N]}-f\|_{\cH}\to 0$ as $N\to\infty$. In the analogy with the analysis of Theorem \ref{thm:boundedinvprob}, the sequence of approximate solutions is convergent and the error $\mathscr{E}_N$ indeed vanishes strongly, and so does, necessarily, the residual $\mathfrak{R}_N$. We can thus understand Theorem \ref{thm:Nemirovski} in view of our Theorem \ref{thm:boundedinvprob}.

\section{Numerical tests: effects of changing the truncation basis}\label{sec:numerics}

In this final Section we examine some of the features discussed theoretically so far through a few numerical tests concerning different choices of the truncation bases. We employed a Legendre, complex Fourier, and a Krylov basis to truncate the problems.

The two model operators that we considered are the Volterra operator $V$ in $L^2[0, 1]$  and the self-adjoint multiplication operator $M:L^2[1,2]\to L^2[1,2]$, $\psi\mapsto x\psi$. We examined the following two inverse problems.

\medskip

\underline{\textbf{First problem:}} $Vf_1=g_1$, with $g_1(x)=\frac{1}{2}x^2$.

The problem has unique solution
 \begin{equation}\label{eq:solV}
  f_1(x)\;=\;x\,,\qquad \|f_1\|_{L^2[0,1]}\;=\;\frac{1}{\sqrt{3}}\;\simeq\;0.5774
 \end{equation}
 and $f_1$ \emph{is} a Krylov solution, i.e., $f_1\in\overline{\mathcal{K}(V,g)}$, although $f_1\notin\mathcal{K}(V,g)$. To prove the first fact, let us observe that $\mathcal{K}(V,g)$ is spanned by the monomials $x^2,x^3,x^4,\dots$, i.e., $\mathcal{K}(V,g) = \{x^2p\,|\, p\textrm{ is a polynomial on } [0,1]\}$; therefore, if $h\in\mathcal{K}(V,g)^\perp$, then $0=\int_0^1\overline{h(x)}\,x^2p(x)\,\ud x$ for any polynomial $p$; the $L^2$-density of polynomials on $[0,1]$ implies necessarily that $x^2h=0$, whence also $h=0$; this proves that $\mathcal{K}(V,g)^\perp=\{0\}$ and hence $\overline{\mathcal{K}(V,g)}=L^2[0,1]$. The fact that $f_1\notin\mathcal{K}(V,g)$ follows from $f_1(x)=x^2\cdot\frac{1}{x}$ and $\frac{1}{x}\notin L^2[0,1]$.
 

\medskip

\underline{\textbf{Second problem:}} $Mf_2=g_2$, with $g_2(x)=x^2$.

The problem has unique solution
\begin{equation}\label{eq:solM}
  f_2(x)\;=\;x\,,\qquad \|f_2\|_{L^2[1,2]}\;=\;\sqrt{\frac{7}{3}}\;\simeq\;1.5275
 \end{equation}
 and $f_2$ \emph{is} a Krylov solution. Indeed, $\mathcal{K}(M,g) = \{x^2p\,|\, p\textrm{ is a polynomial on } [1,2]\}$ and $\overline{\mathcal{K}(M,g)} = \{x^2h(x)\,|\, h \in L^2[1,2]\}=L^2[1,2]$, whence $f_2\in\overline{\mathcal{K}(M,g)}$ and $f_2\notin\mathcal{K}(M,g)$.

\medskip

\begin{figure}[!t]
  \centering
  \begin{subfigure}[b]{\textwidth}
    \includegraphics[width = 0.3\textwidth]{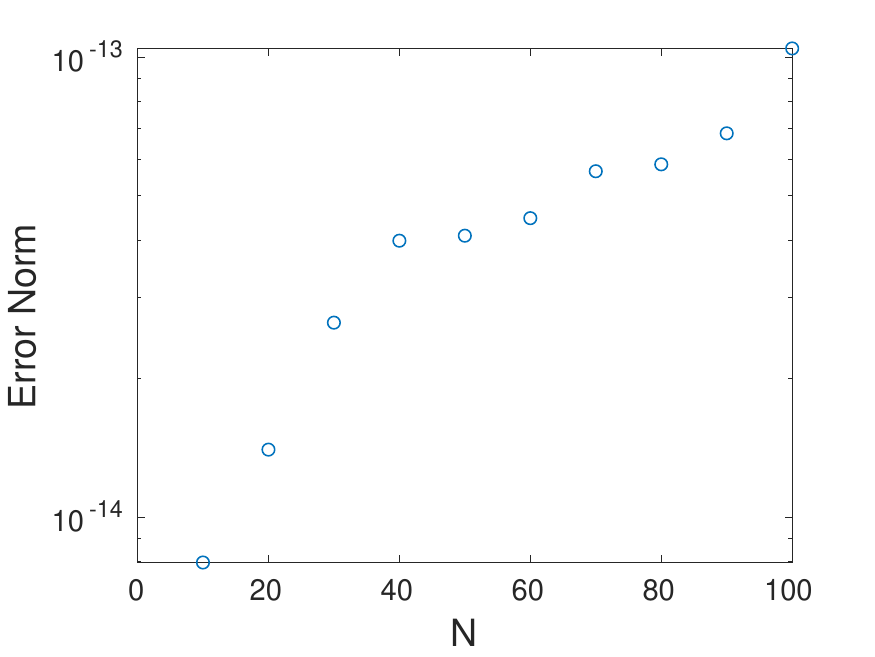} \quad
    \includegraphics[width = 0.3\textwidth]{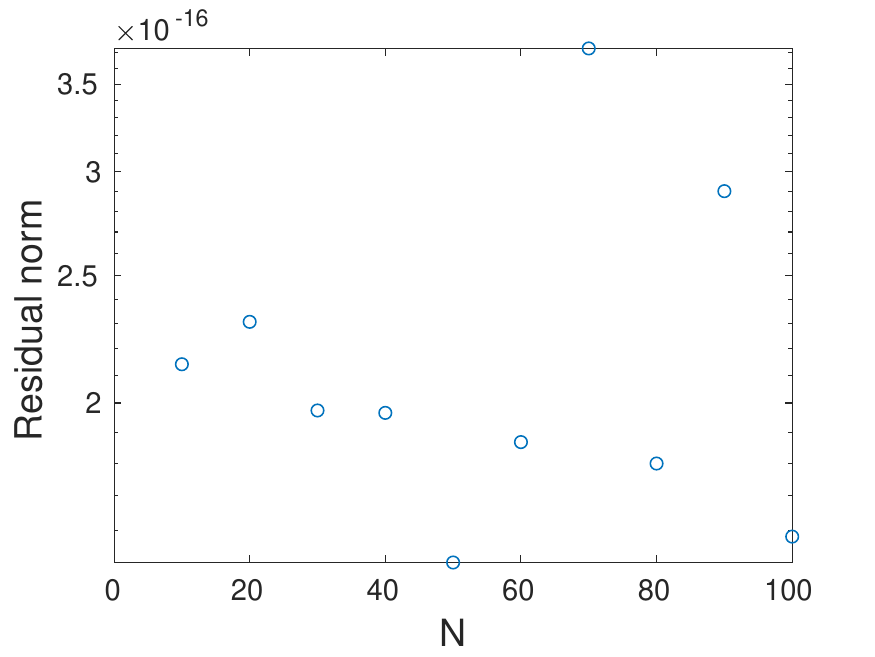} \quad
    \includegraphics[width = 0.3\textwidth]{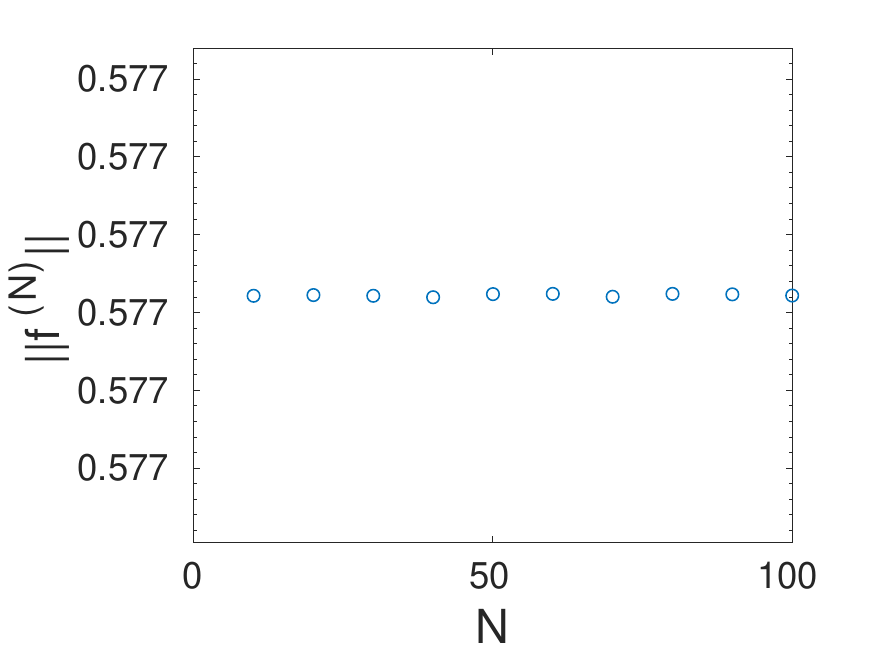}
    \caption{Legendre basis truncation}
  \end{subfigure}
  \begin{subfigure}[b]{\textwidth}
    \includegraphics[width = 0.3\textwidth]{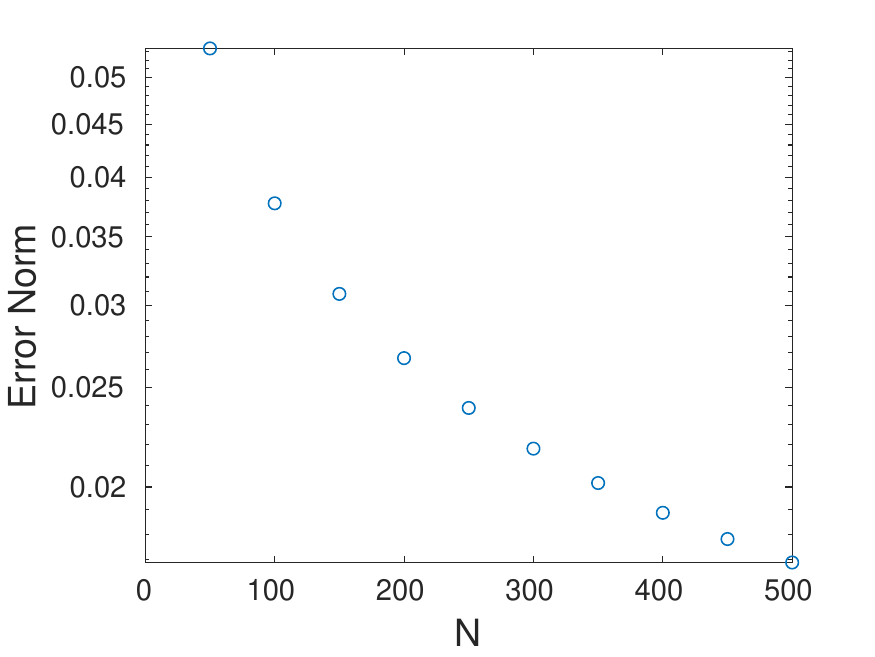} \quad
    \includegraphics[width = 0.3\textwidth]{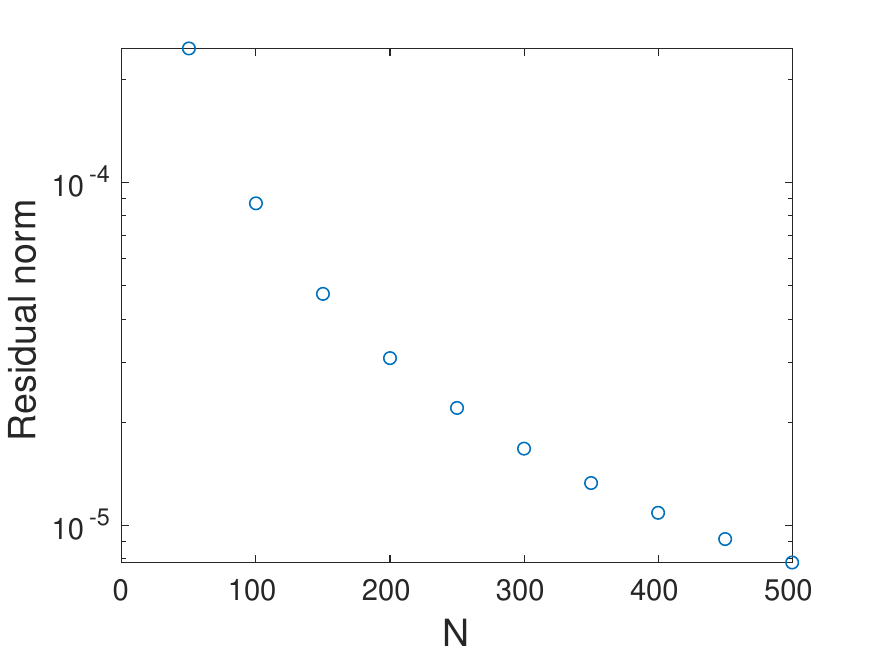} \quad
    \includegraphics[width = 0.3\textwidth]{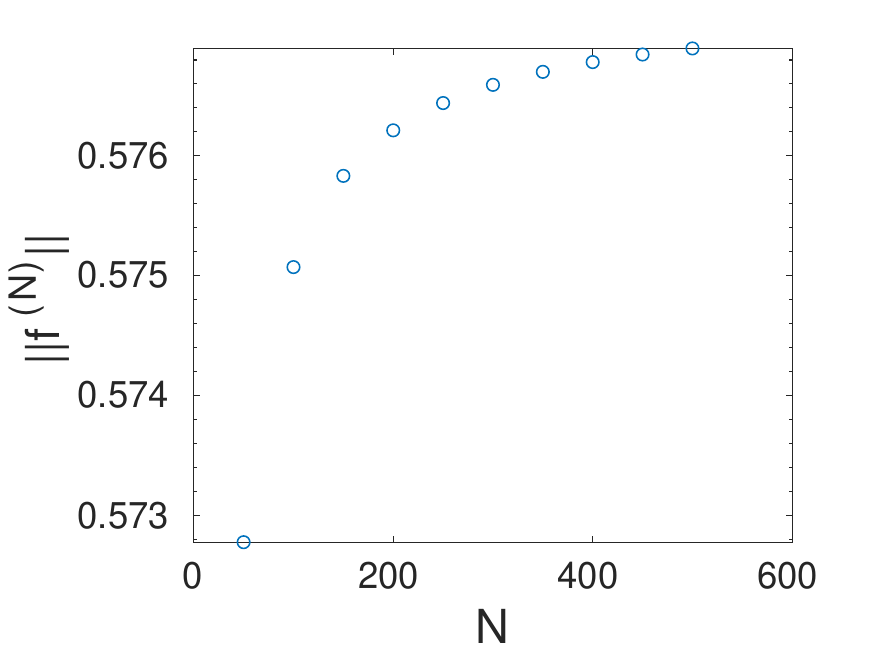}
    \caption{Complex Fourier basis truncation}
  \end{subfigure}
  \begin{subfigure}[b]{\textwidth}
    \includegraphics[width = 0.3\textwidth]{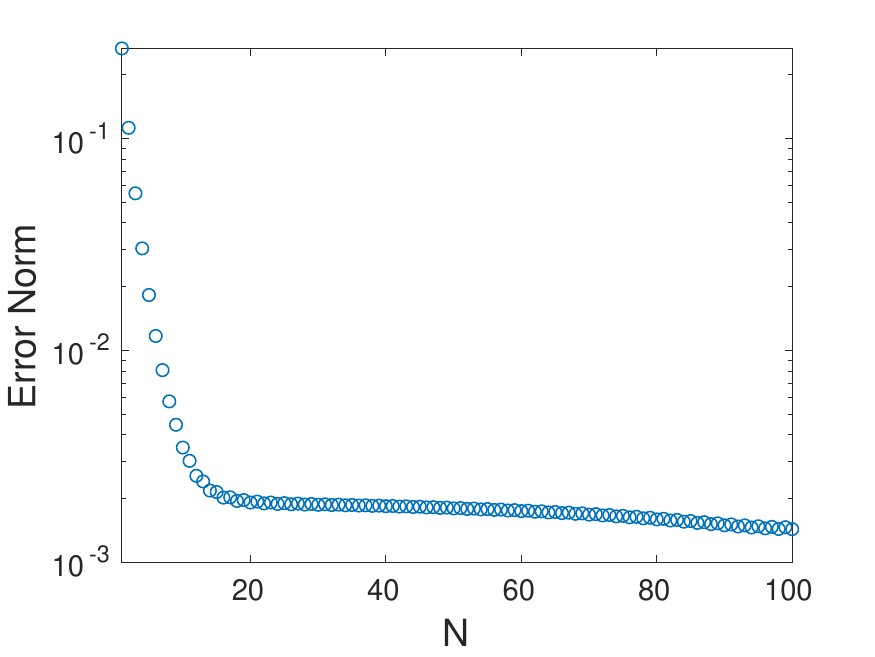} \quad
    \includegraphics[width = 0.3\textwidth]{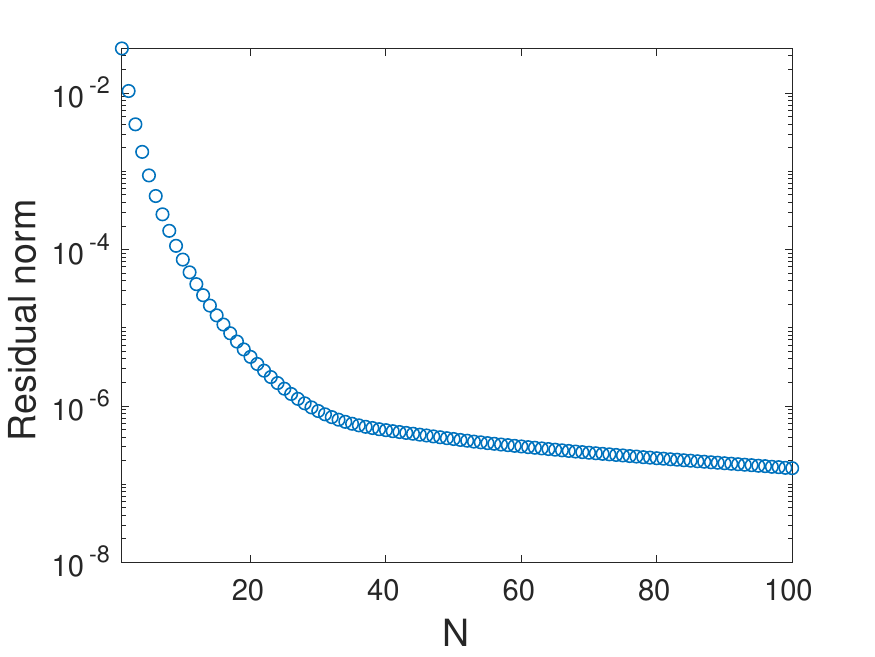} \quad
    \includegraphics[width = 0.3\textwidth]{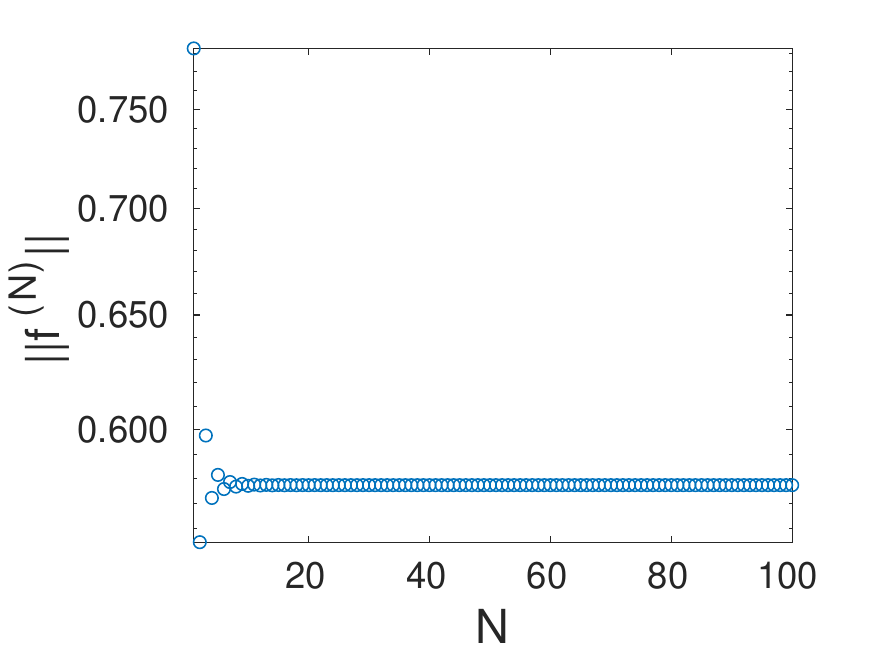} \quad
    \caption{Krylov basis truncation}
  \end{subfigure}
  \caption{Norm of the infinite-dimensional error and residual, and of the approximated solution for the Volterra inverse problem truncated with the Legendre, complex Fourier, and Krylov bases.} \label{fig:Volt_basis}
\end{figure}

\begin{figure}[!t]
  \centering
  \begin{subfigure}[b]{\textwidth}
    \includegraphics[width = 0.3\textwidth]{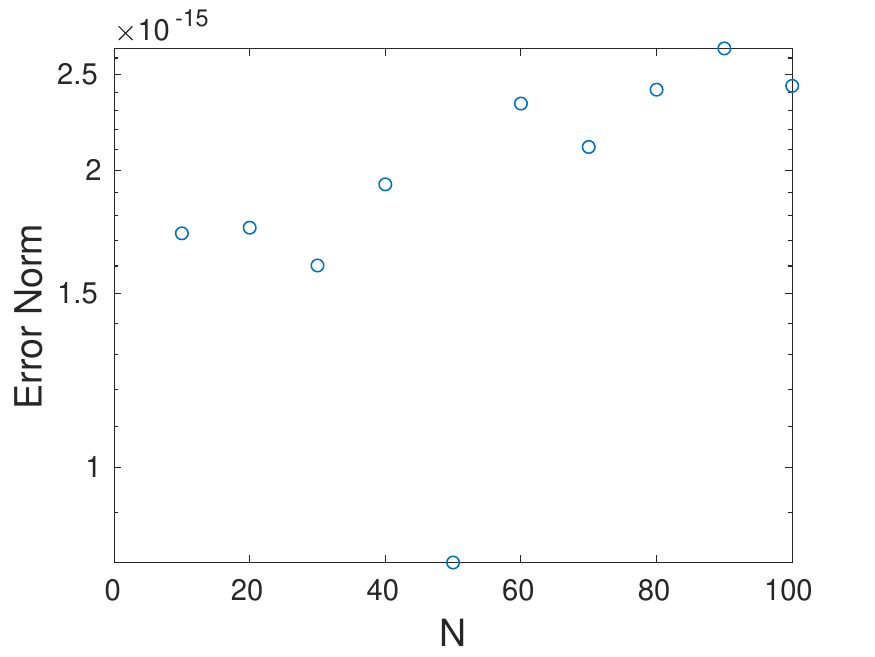} \quad
    \includegraphics[width = 0.3\textwidth]{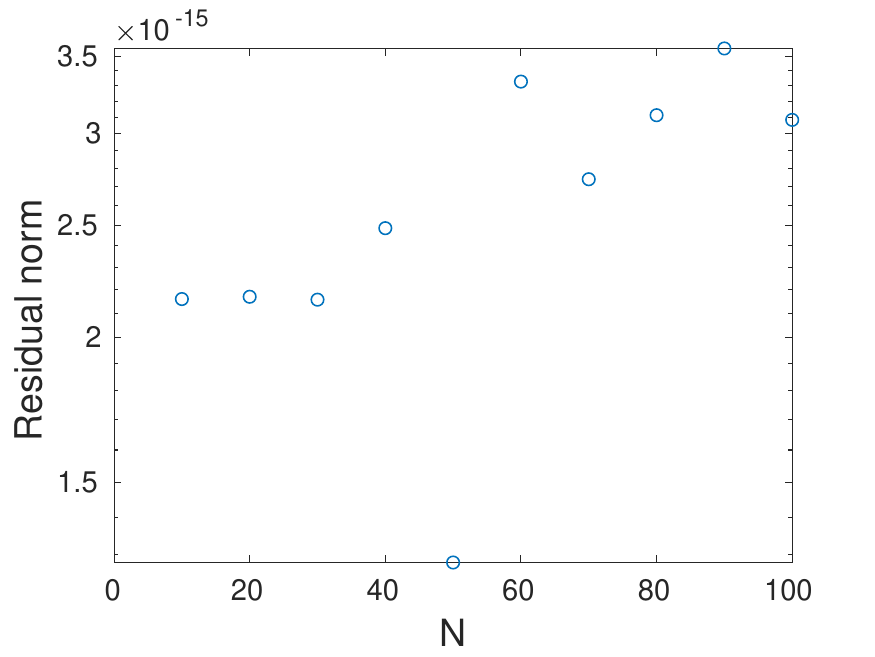} \quad
    \includegraphics[width = 0.3\textwidth]{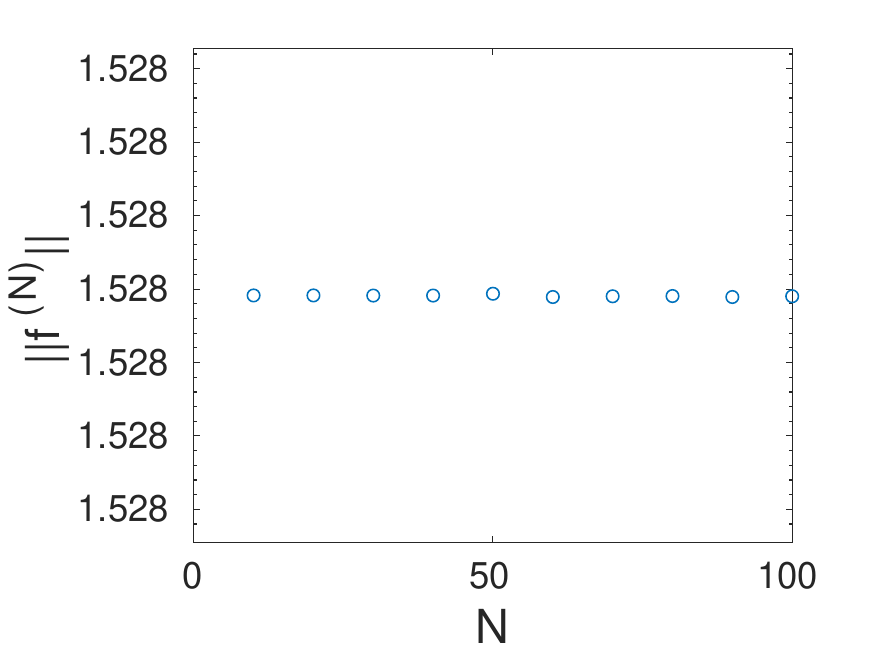}
    \caption{Legendre basis truncation}
  \end{subfigure}
  \begin{subfigure}[b]{\textwidth}
    \includegraphics[width = 0.3\textwidth]{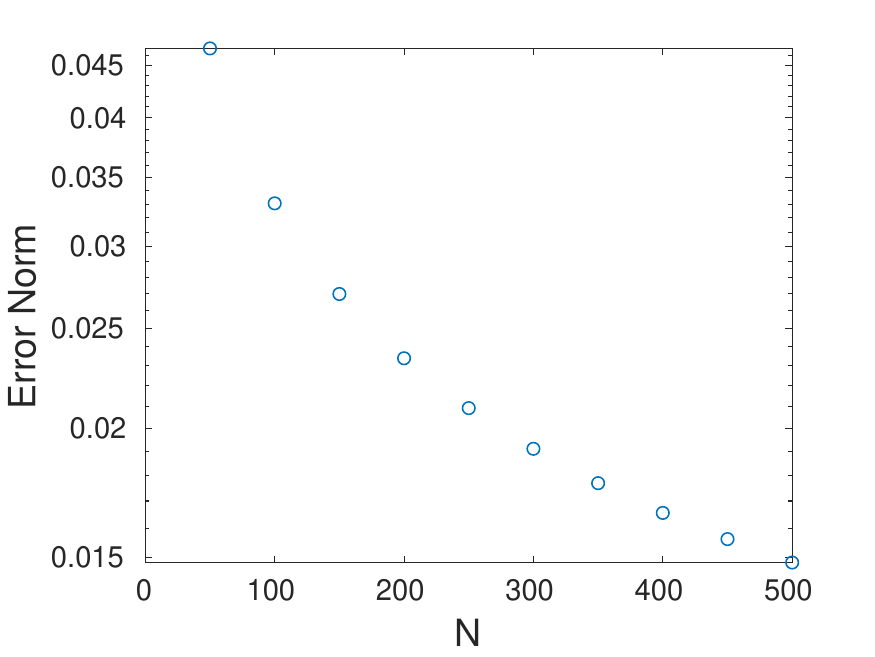} \quad
    \includegraphics[width = 0.3\textwidth]{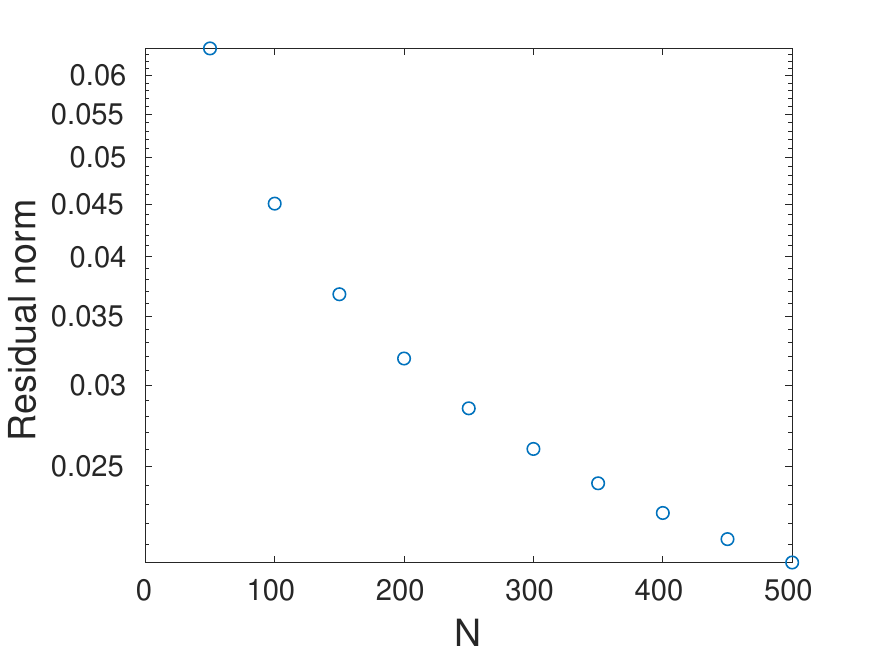} \quad
    \includegraphics[width = 0.3\textwidth]{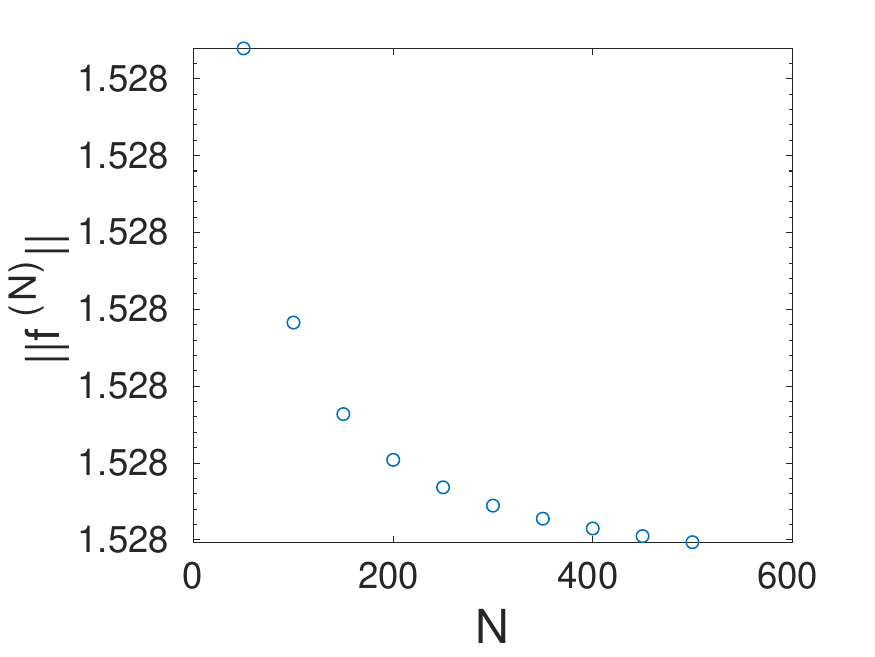}
    \caption{Complex Fourier basis truncation}
  \end{subfigure}
  \begin{subfigure}[b]{\textwidth}
    \includegraphics[width = 0.3\textwidth]{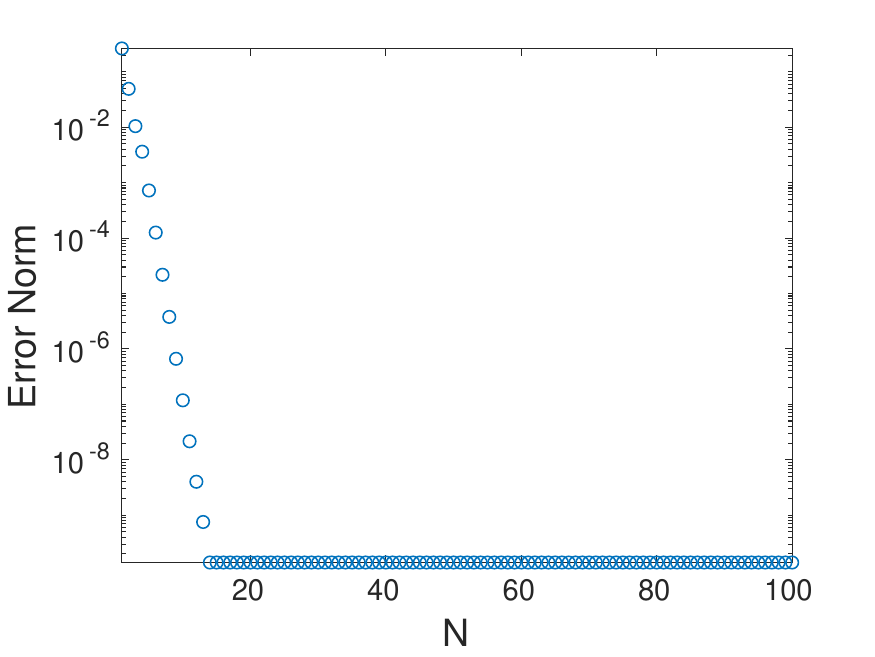} \quad
    \includegraphics[width = 0.3\textwidth]{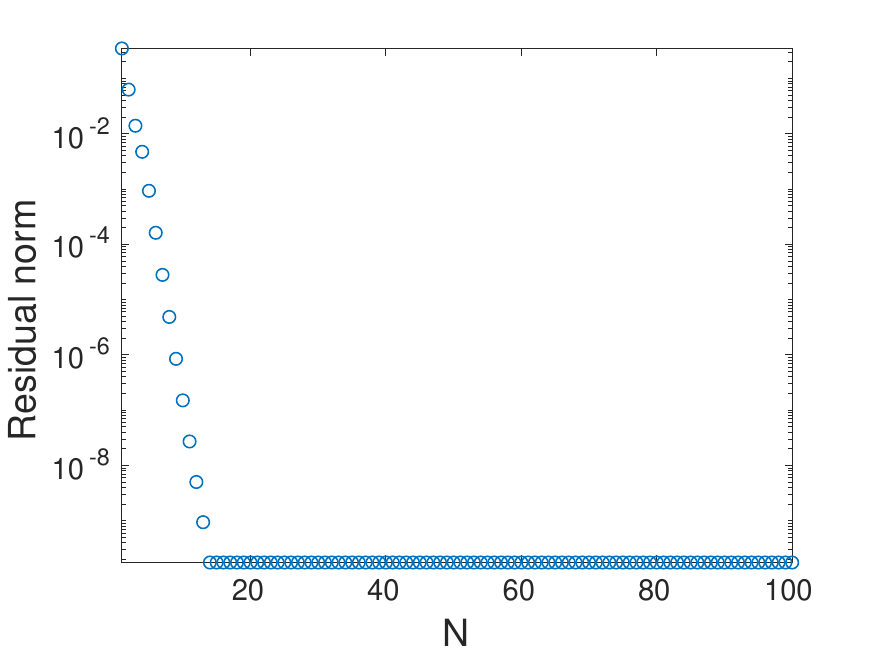} \quad
    \includegraphics[width = 0.3\textwidth]{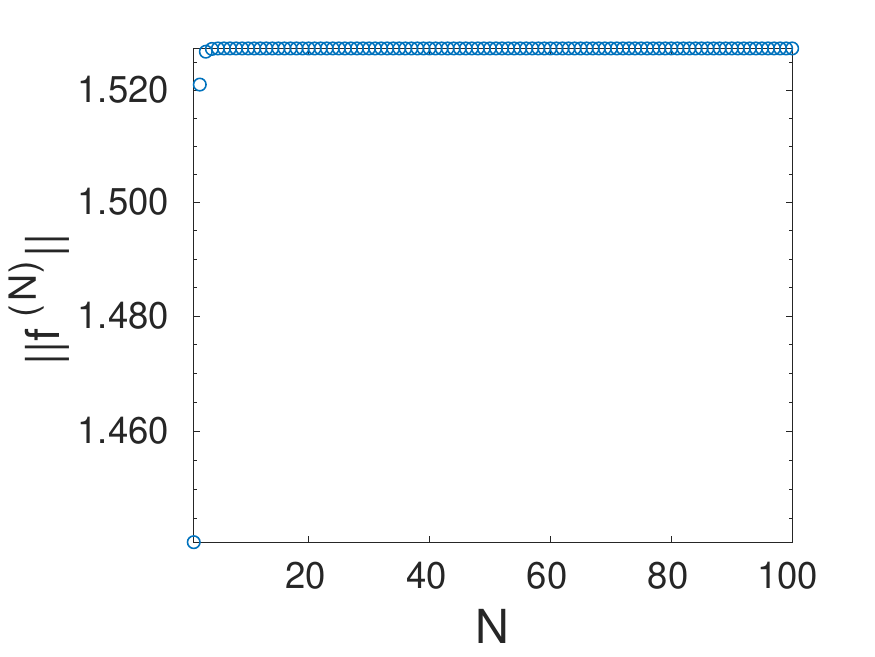} \quad
    \caption{Krylov basis truncation}
  \end{subfigure}
  \caption{Norm of the infinite-dimensional error, residual, and approximated solution for the $M$-multiplication inverse problem truncated with the Legendre, complex Fourier, and Krylov bases.} \label{fig:Mult_basis}
\end{figure}

\begin{figure}[!t]
  \centering
  \begin{subfigure}[b]{0.45\textwidth}
    \includegraphics[width = \textwidth]{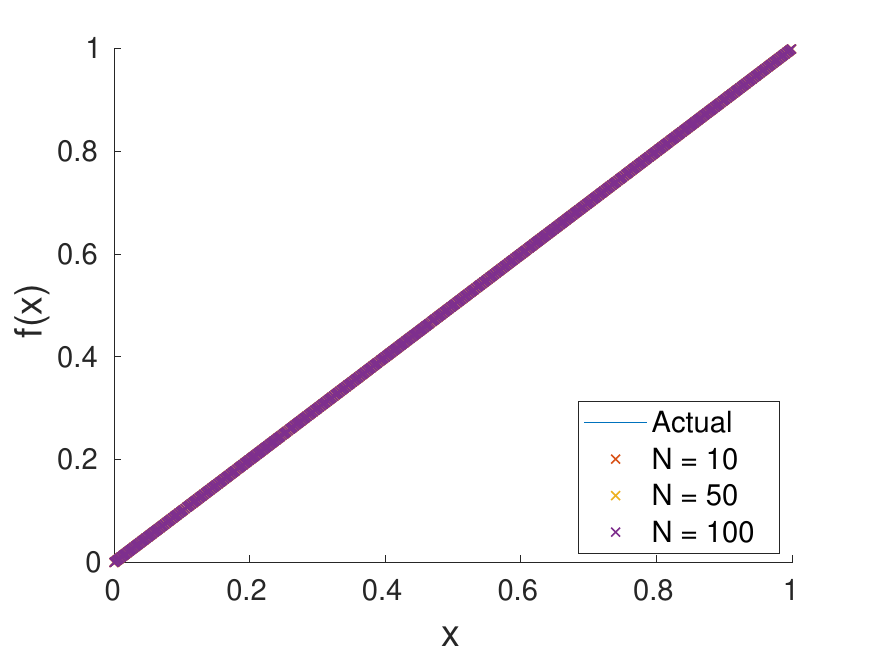}
    \caption{Legendre basis}
  \end{subfigure}
  \begin{subfigure}[b]{0.45\textwidth}
    \includegraphics[width = \textwidth]{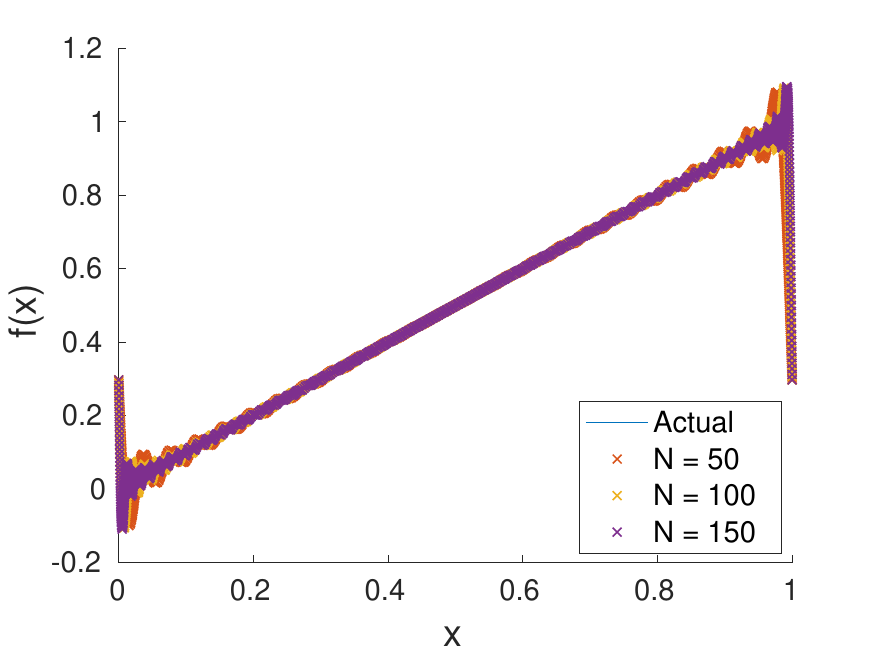}
    \caption{Complex Fourier basis}
  \end{subfigure} \\
  \begin{subfigure}[b]{0.45\textwidth}
    \includegraphics[width = \textwidth]{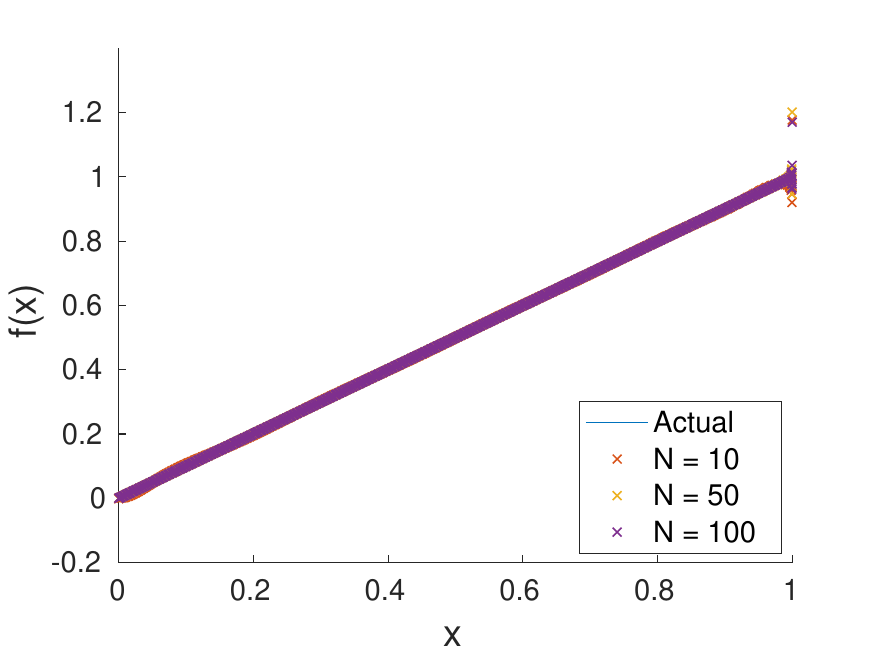}
    \caption{Krylov basis}
  \end{subfigure}   
  \caption{Reconstruction of the exact solution $f_1(x) = x$ from the solutions for the problem $Vf_1=g_1$. The Fourier basis produces an inaccurate reconstruction due to high oscillations, resulting in higher errors.} \label{fig:Volt_recon}
\end{figure}

\begin{figure}[!t]
  \centering
  \begin{subfigure}[b]{0.45\textwidth}
    \includegraphics[width = \textwidth]{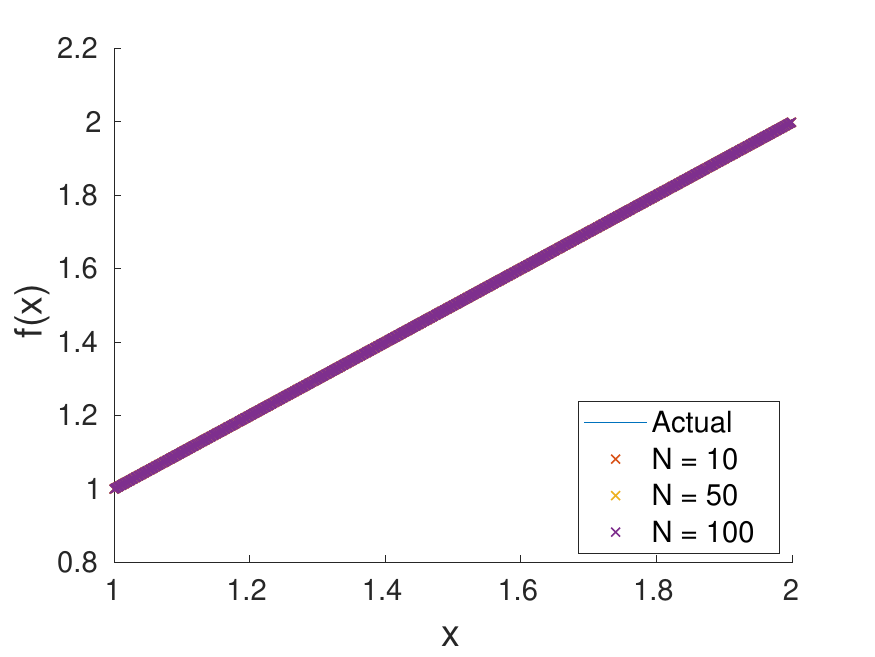}
    \caption{Legendre basis}
  \end{subfigure}
  \begin{subfigure}[b]{0.45\textwidth}
    \includegraphics[width = \textwidth]{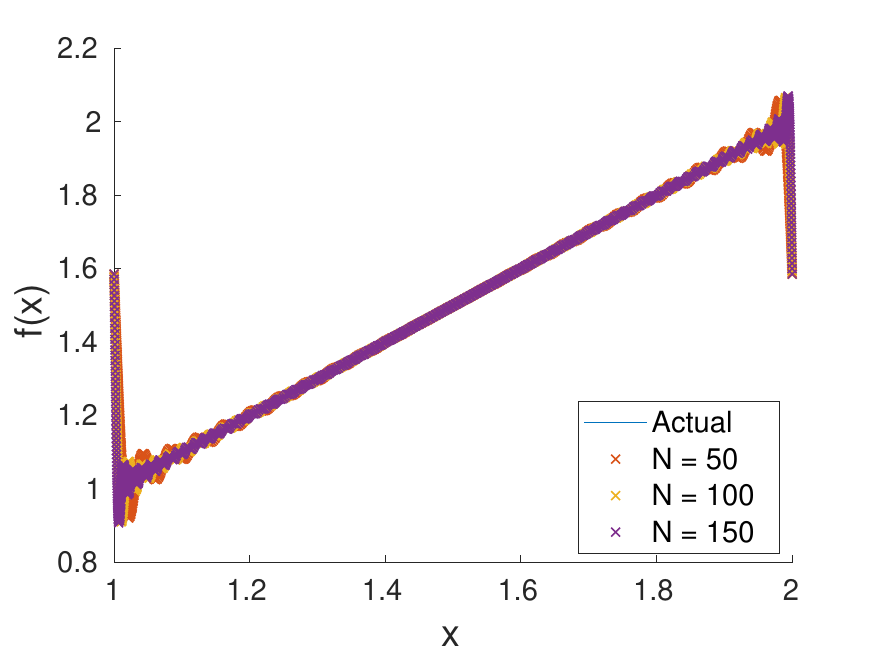}
    \caption{Complex Fourier basis}
  \end{subfigure}  
  \begin{subfigure}[b]{0.45\textwidth}
    \includegraphics[width = \textwidth]{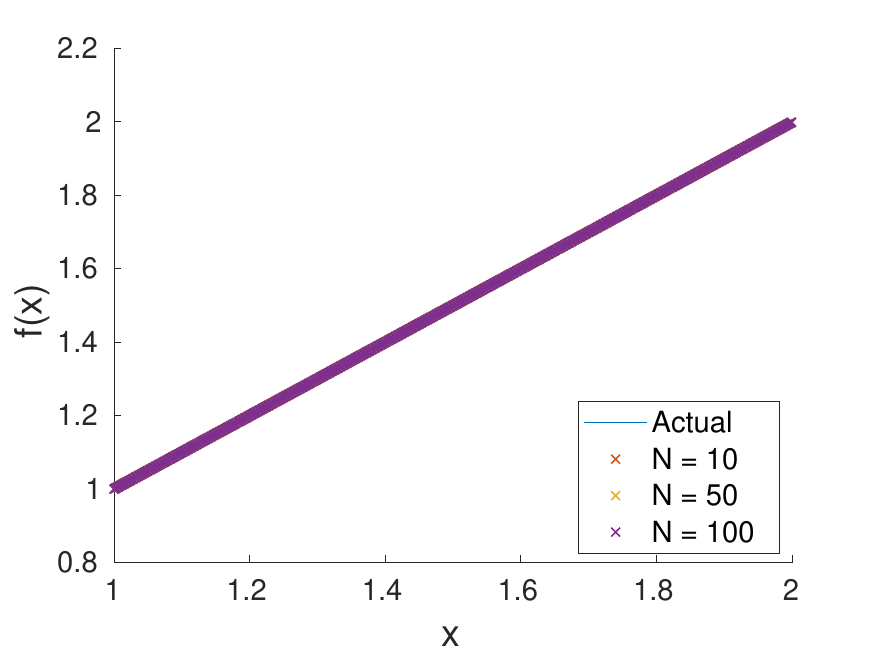}
    \caption{Krylov basis}
  \end{subfigure}  
  \caption{Reconstruction of the exact solution $f_2(x) = x$ from the solutions for the problem $Mf_2=g_2$.} \label{fig:Mult_recon}
\end{figure}

We treated both problems with three different orthonormal bases: the Legendre polynomials and the complex Fourier modes (on the intervals $[0,1]$ or $[1,2]$, depending on the problem) solved using the QR factorisation algorithm, and the Krylov basis generated using the GMRES algorithm.

Computationally speaking, generating accurate representations of the Legendre polynomials is quite demanding and accuracy can be lost rather soon due to their highly oscillatory nature, particularly at the end points. For this reason we limited our investigation up to $N = 100$ when considering the Legendre basis, but $N = 500$ when considering the complex Fourier basis.  It is expected that there is no significant numerical error from the computation of the Legendre basis, as the $L^2[0, 1]$ and $L^2[1, 2]$ norms of the basis polynomials have less than 1$\%$ error compared to their exact unit value.

For each problem and each choice of the basis, we monitored the norm of the infinite-dimensional error $\|\mathscr{E}_N\|_{L^2}=\|f-\widehat{f^{(N)}}\|_{L^2}$ ($f=f_1$ or $f_2$), of the infinite-dimensional residual $\|\mathfrak{R}_N\|_{L^2}=\|g-A\,\widehat{f^{(N)}}\|_{L^2}$ ($g=g_1$ or $g_2$; $A=V$ or $M$), and of the approximated solution $\|\widehat{f^{(N)}}\|_{L^2}=\|f^{(N)}\|_{\mathbb{C}^N}$.

Figures \ref{fig:Volt_basis} and \ref{fig:Volt_recon} highlight the difference between the computation in the three bases for the Volterra operator. 
\begin{itemize}
 \item In the Legendre basis, $\|\mathscr{E}_N\|_{L^2}$ and $\|\mathfrak{R}_N\|_{L^2}$ are almost zero. $\|\widehat{f^{(N)}}\|_{L^2}$ stays bounded and constant with $N$ and matches the expected value \eqref{eq:solV}. The approximated solutions reconstruct the exact solution $f_1$  at any truncation number.
 \item In the complex Fourier basis, both $\|\mathscr{E}_N\|_{L^2}$ and $\|\mathfrak{R}_N\|_{L^2}$ are some orders of magnitude \emph{larger} than in the Legendre basis and decrease monotonically with $N$; in fact, $\|\mathscr{E}_N\|_{L^2}$ and $\|\mathfrak{R}_N\|_{L^2}$ display an evident convergence to zero, however attaining values that are more than ten orders of magnitude larger than the corresponding error and residual norms for the same $N$ in the Legendre case. $\|\widehat{f^{(N)}}\|_{L^2}$, on the other hand, increases monotonically and appears to approach the theoretical value \eqref{eq:solV}. These quite stringent differences in the error and residual may be attributable to the Gibbs phenomenon. In fact, reconstructing $f_1$ using the Krylov approximated solutions produces a vector that shows a highly oscillatory behaviour near the end points, confirming the presence of the Gibbs phenomenon. 
 \item In the Krylov basis $\|\mathscr{E}_N\|_{L^2}$ and $\|\mathfrak{R}_N\|_{L^2}$ decrease monotonically, relatively fast for small $N$'s, then rather slowly with $N$. Such quantities are smaller than in the Fourier basis. $\|\widehat{f^{(N)}}\|_{L^2}$ displays some initial highly oscillatory behaviour, but quickly approaches the theoretical value \eqref{eq:solV}. On the other hand, the reconstruction appears to be quite good with some noticeable oscillations at the end points.
 \end{itemize}

Thus, among the considered truncations the Legendre basis yields the most accurate reconstruction and the complex Fourier basis yields the least accurate reconstruction of the exact solution.


In contrast, Figures \ref{fig:Mult_basis} and \ref{fig:Mult_recon} highlight the difference between the computation in the three bases for the $M$-multiplication operator. 
\begin{itemize}
 \item In the Legendre basis, $\|\mathscr{E}_N\|_{L^2}$ and $\|\mathfrak{R}_N\|_{L^2}$ are again almost zero. $\|\widehat{f^{(N)}}\|_{L^2}$ is constant with $N$ at the expected value \eqref{eq:solM}. The approximated solutions reconstruct the exact solution $f_2$ at any truncation number.
 \item In the Fourier basis the behaviour of the above indicators is again qualitatively the same, and again with a much milder convergence rate in $N$ to the asymptotic values as compared with the Legendre case. $\|\mathscr{E}_N\|_{L^2}$ and $\|\mathfrak{R}_N\|_{L^2}$ still display an evident convergence to zero. Again the higher error compared to the Legendre case is likely due to the nature of the approximation of the exact solution $f_2$ by oscillatory functions and the Gibbs phenomenon.
 \item The Krylov basis displays a fast initial decrease of both $\|\mathscr{E}_N\|_{L^2}$ and $\|\mathfrak{R}_N\|_{L^2}$ to the tolerance level of $10^{-10}$ that was set for the residual. $\|\widehat{f^{(N)}}\|_{L^2}$ also increases rapidly and remains constant at the expected value \eqref{eq:solM}. The reconstruction of the solution is excellent, but still not quite as good as the Legendre case.
\end{itemize}


All this gives numerical evidence that the choice of the truncation basis \emph{does} affect the sequence of solutions. The Legendre basis is best suited to these problems as $f_1$, $f_2$, $g_1$ and $g_2$ are perfectly representable by the first few basis vectors.

\def\cprime{$'$}

\end{document}